\let\oldmarginpar\marginpar
\renewcommand\marginpar[1]{\-\oldmarginpar[\raggedleft\footnotesize #1]%
	{\raggedright\footnotesize #1}}
\newtheorem{thm}{Theorem}[section] 
\newtheorem{lem}[thm]{Lemma}
\newtheorem{prop}[thm]{Proposition}
\theoremstyle{definition}
\numberwithin{claimcounter}{thm}
\newtheorem{claim}{Claim}
\newtheorem*{claim*}{Claim}
\newtheorem*{theorem*}{Theorem}
\newcommand{\dd}{\partial}
\newcommand{\Z}{\mathbb Z}
\newcommand{\Q}{\mathbb Q}
\newcommand{\N}{\mathbb N}
\newcommand{\R}{\mathbb R}
\providecommand{\leftsquigarrow}{%
  \mathrel{\mathpalette\reflect@squig\relax}%
}
\newcommand{\reflect@squig}[2]{%
  \reflectbox{$\m@th#1\rightsquigarrow$}%
}
\def\Ddots{\mathinner{\mkern1mu\raise\p@
\vbox{\kern7\p@\hbox{.}}\mkern2mu
\raise4\p@\hbox{.}\mkern2mu\raise7\p@\hbox{.}\mkern1mu}}
\def \horline [#1](#2,#3,#4){
    \draw [#1] (#2,#4) -- (#3,#4);
    \draw [fill=white] (#2,#4) circle [radius=0.1];
    \draw [fill=black] (#3,#4) circle [radius=0.1];
}
\def \crossing (#1,#2)(#3,#4){
\draw (#1,#2) -- (#3,#4);
\draw (#1,#4) -- (#3,#2);
}
\DeclareFontFamily{U}{mathb}{}
\DeclareFontShape{U}{mathb}{m}{n}{
  <-5.5> mathb5
  <5.5-6.5> mathb6
  <6.5-7.5> mathb7
  <7.5-8.5> mathb8
  <8.5-9.5> mathb9
  <9.5-11.5> mathb10
  <11.5-> mathbb12
}{}
\tikzset{tangent/.style={decoration={markings,mark=at position #1 with {
      \coordinate (tangent point-\pgfkeysvalueof{/pgf/decoration/mark info/sequence number}) at (0pt,0pt);
      \coordinate (tangent unit vector-\pgfkeysvalueof{/pgf/decoration/mark info/sequence number}) at (1,0pt);
      \coordinate (tangent orthogonal unit vector-\pgfkeysvalueof{/pgf/decoration/mark info/sequence number}) at (0pt,1);
      }},postaction=decorate},
    use tangent/.style={
        shift=(tangent point-#1),
        x=(tangent unit vector-#1),
        y=(tangent orthogonal unit vector-#1)
    },
    use tangent/.default=1
    }
\definecolor{codegreen}{rgb}{0,0.6,0}
\definecolor{codegray}{rgb}{0.5,0.5,0.5}
\definecolor{codepurple}{rgb}{0.58,0,0.82}
\definecolor{backcolour}{rgb}{0.95,0.95,0.92}
\lstdefinestyle{mystyle}{
    backgroundcolor=\color{backcolour},   
    commentstyle=\color{codegreen},
    keywordstyle=\color{magenta},
    numberstyle=\tiny\color{codegray},
    stringstyle=\color{codepurple},
    basicstyle=\ttfamily\footnotesize,
    breakatwhitespace=false,         
    breaklines=true,                 
    captionpos=b,                    
    keepspaces=true,                 
    numbers=left,                    
    numbersep=5pt,                  
    showspaces=false,                
    showstringspaces=false,
    showtabs=false,                  
    tabsize=2
}
\title{Boundedness criteria for real quivers of rank 3}
\author{Roger Casals and Kenton Ke}
\date{}
\begin{document}

\begin{abstract}
We study the boundedness of a mutation class for quivers with real weights. The main result is a characterization of bounded mutation classes for real quivers of rank 3.
\end{abstract}

\maketitle

\section{Introduction}

The object of this note is to introduce and study the notion of boundedness for mutation classes of quivers with real weights. In short, a quiver mutation class is said to be bounded if the coefficients of any of its quivers are uniformly bounded. This is a subtler notion for quivers with real weights, as opposed to quivers with integer weights, as we show that there exists bounded mutation classes with infinitely many quivers in them. Our main contribution is a characterization of rank 3 quivers with bounded mutation class, leading to the classification of such quivers with a criterion that can be readily verified. Two different proofs for the result are provided: the first proof is based on multivariate analysis and explicit bounds, while the second argument directly uses the geometric realization of quiver mutations by A.~Felikson and and P.~Tumarkin.

\subsection{Scientific context} Quivers and their mutations have acquired a prominent role in mathematics, especially since the introduction of cluster algebras by S.~Fomin and A.~Zelevinsky, cf.~\cite{FominZelevinsky_DoubleBruhat,FominZelevinsky_ClusterI,FominZelevinsky_ClusterII}. For a reference focused on the combinatorics of mutation, S.~Fomin presented a number of known results and open problems on quiver mutations in his talk at OPAC 2022, cf.~\cite{Fomin22_Opac}. As witnessed by the number of basic questions that remain open, it might be fair to state that the combinatorics governing quiver mutations remain rather mysterious. Some recent efforts to understand the combinatorics of quiver mutations have been fruitful, e.g.~ studying long mutation cycles, cf.~\cite{ervin2025mutationcyclesreddeningsequences,fomin2023longmutationcycles}, or constructing invariants of quiver mutation, cf.~ \cite{casals2023binaryinvariantmatrixmutation,fomin2024cyclicallyorderedquivers,neville2024mutationacyclicquiverstotallyproper,seven2024congruenceinvariantsmatrixmutation}.

\begin{center}
\begin{figure}[h!]
    \centering
	\includegraphics[width=\textwidth]{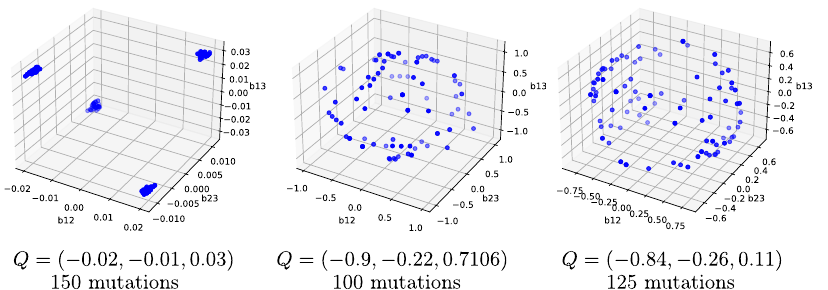}
		\caption{Depiction of the $(p,q,r)\in\R^3$ coordinates for three quivers $Q$ and a random sequence of mutations applied to them. In these cases, all three mutation classes $[Q]$ are bounded.  The mutation sequences have been generated at random and are provided in the appendix. In the main text the variables $p,q,r\in\R_{\geq0}$ are always non-negative but, to ease implementation, the code and the figures it produces have $(p,q,r)\in\R^3$. In this case, a negative sign can readily be transformed into a positive sign by reversing the corresponding arrow of the quiver: the arrow goes from vertex 1 to vertex 2, if $p$ positive, or from vertex 2 to vertex 1, if $p$ is negative.}\label{fig:Examples1}
\end{figure}
\end{center}

Thinking of mutations of a quiver as a discrete group action, it is reasonable to wonder about the dynamical properties of quiver mutation. In turn, it is often productive to study discrete dynamical systems in relation to real continuous dynamical systems. From that perspective, we are naturally led to study quivers with real weights and their orbits under mutations. We use the notation $Q=(p,q,r)$ for a quiver $Q$ with real weights $p,q,r\in\R_{\geq0}$. Here $p$ is the weight of the edge from vertex 1 to vertex 2. Similarly, the weight $q$ is for the arrow from vertex 2 to 3, and $r$ from vertex 1 to vertex 3.\footnote{For the purpose of Figures \ref{fig:Examples1} and \ref{fig:Examples2} only, we use the notation $Q=(p,q,r)$ for a quiver $Q$ with real weights $p,q,r\in\R$, where the negative sign indicates an arrow reversal. Throughout the main text $p,q,r\in\R_{\geq0}$, i.e.~ we only allow weights to be negative for the code and these two figures produced from it.} To depict this visually, suppose for instance that we have a quiver $Q=(p,q,r)$ with $p,q,r\in\R_{>0}$. Then the quiver $Q$ and its exchange matrix $B_Q$ are depicted as:

\vspace{-5mm}
\begin{equation*}
    \begin{tikzcd}[column sep=1cm, row sep=1cm]
& 2 \ar[rdd,"q"] \\
& \\
1 \ar[rr,"r"] \ar[ruu, "p"] && 3 \\
\end{tikzcd},
\qquad B_Q=\begin{pmatrix}
0 & p & r\\
-p & 0 & q\\
-r & -q & 0
\end{pmatrix}.
\end{equation*}
\vspace{-10mm}

\noindent To get a first sense, Figure \ref{fig:Examples1} depicts the quivers obtained by applying three random mutation sequences $(\mu_{i_\ell}\circ\ldots\circ\mu_{i_1})(Q)$ to three randomly chosen quivers $Q=(p,q,r)$ with real weights.\footnote{The specific mutation sequences $(i_1,\ldots,i_\ell)$ for each of these three quivers are written in \cref{ssec:explicit_seq}. We provide a code to generate such images in \cref{ssec:code}, with input the quiver $Q$ and the desired length of a mutation sequence.} Figure \ref{fig:Examples2} depicts three different random sequences of mutations applied to the same quiver $Q$. Note that, independently, the study of quivers with real weights has also gained recent attention due to their connection to the metric geometry of surfaces, cf.~e.g.~\cite{FT2017,felikson2022mutationfinitequiversrealweights,lampe2016approximateperiodicitysequencesattached} and references therein. In particular, mutation-finite quivers with real weights were beautifully classified in \cite[Theorem A]{felikson2022mutationfinitequiversrealweights} by A.~Felikson and P.~Tumarkin.

\begin{center}
	\begin{figure}[h!]
		\centering
        \includegraphics[width=\textwidth]{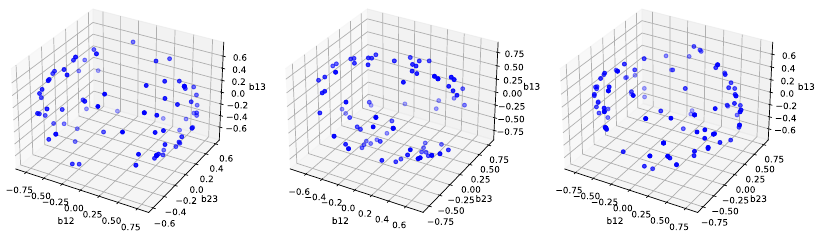}
		\caption{Depiction of the $(p,q,r)\in\R^3$ coordinates for quivers obtain by applying three different sequences to $Q=(-0.6,-0.43,0.567)$. The mutation class $[Q]$ is bounded. The mutation sequences have been generated at random, cf.~ \cref{sec:appendix}.}\label{fig:Examples2}
	\end{figure}
\end{center}

For quiver with real weights, the combinatorics of quiver mutation gains certain dynamical and geometric aspects that are not present for quivers with integer weights, see e.g.~\cite{Machacek02042024}. The focus of this note is to begin the study of one such property: the notion of a mutation class being bounded. Namely, understanding whether the coefficients of a given quiver remain uniformly bounded under an arbitrarily long sequence of mutations. For instance, we shall prove that the quivers $Q$ in Figure \ref{fig:Examples1} and \ref{fig:Examples2} have bounded mutation classes. For a quiver with integer weights, a quiver mutation class is bounded if and only if it is finite. For a quiver with real weights, such simple characterization fails: there are mutation classes that are infinite, i.e.~they contain infinitely many different quivers, and yet all quivers have their coefficients uniformly bounded.

A specific goal when studying a property of a quiver mutation class is to have a usable criterion to decide whether the mutation class of a given quiver possesses such property. In our case, the aim would be to be able to tell whether a given quiver with real weights has a bounded or unbounded mutation class. Since a mutation-finite quiver tautologically has a bounded mutation class, the focus is on deciding whether a mutation-infinite quiver has a bounded or unbounded mutation class. Our main result achieves this goal for quivers of rank 3.

\subsection{Main result} Let $Q$ be a quiver with real weights and $[Q]$ its mutation class. We refer to \cite[Chapter 2]{FWZ} for background on quiver mutations and \cite[Section 4]{lampe2016approximateperiodicitysequencesattached} for the case of real weights. The classification of finite-mutation type quivers with real weights is established in \cite[Thm.~5.9]{FT2017} for rank 3 and in \cite[Theorem A]{felikson2022mutationfinitequiversrealweights} for arbitrary rank.

\noindent Given a quiver $Q$, we denote by $\|Q\|:=\max\{|w|\in\R: w\mbox{ weight of }Q\}$ the maximum of the absolute values of the weights of $Q$. By definition, the norm of a quiver mutation class $[Q]$ is
$$\|[Q]\|:=\sup_{Q\in[Q]} \|Q\|.$$
By definition, $[Q]$ is said to be bounded if and only if $\|[Q]\|$ is finite. If $[Q]$ is mutation-finite, then $\|[Q]\|$ is finite and thus $[Q]$ is bounded. The converse holds if $Q$ has integer weights. In particular, there exist unbounded mutation-infinite classes $[Q]$. Interestingly, for quivers with real weights, we shall see that there are mutation-infinite classes $[Q]$ that are bounded.

\noindent Our exploration of boundedness focuses on the study of quivers of rank 3. For the remainder of the paper, we write $Q=(p,q,r)$ to indicate that the quiver $Q$ has weights $p,q,r\in\R_{\geq 0}$, where the cyclicity of the underlying directed graph $Q$ is provided as part of the input. The Markov constant $C(Q)\in\R$ of $Q$, as introduced in \cite[Section 1]{BBH2006}, is a mutation invariant of rank 3 quivers whose value governs important aspects of the behavior of $Q$ under mutation. To wit, \cite[Section 4]{FT2017} illustrates how the threshold $C(Q)=4$ marks a transition for the type of geometric realization of such mutation classes. The constant $C(Q)$ is defined as
\[C(Q):=
\begin{cases}
p^2+q^2+r^2-pqr & \mbox{if $Q=(p,q,r)$ is cyclic},\\
p^2+q^2+r^2+pqr & \mbox{if $Q=(p,q,r)$ is acyclic}.\\
\end{cases}\]

The main result of this article is the following characterization:
\newpage

\begin{thm}\label{thm:detbdd}
Let $Q$ be a quiver with real weights $(p,q,r)$ with $p\geq q\geq r\geq 0$. Then 
$$[Q]\mbox{ is bounded }\Longleftrightarrow p\leq2\mbox{ and }C(Q)\leq 4.$$
\end{thm}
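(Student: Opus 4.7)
The plan rests on the mutation-invariance of the Markov constant $C(Q)$ together with a positive semi-definite reformulation of the admissible region. To a rank~$3$ quiver $Q=(p,q,r)$ I would associate the symmetric $3\times 3$ Gram matrix $G_Q$ with unit diagonal and off-diagonal entries of absolute values $p/2, q/2, r/2$, signed according to the cyclicity of $Q$. A direct computation gives $\det G_Q = 1 - C(Q)/4$ together with the three $2\times 2$ principal minors $1-p^2/4$, $1-q^2/4$, $1-r^2/4$. Under the hypothesis $p\geq q\geq r\geq 0$, the conjunction ``$p\leq 2$ and $C(Q)\leq 4$'' is then precisely the statement that $G_Q$ is positive semi-definite, i.e.\ that $Q$ admits a realization by three unit vectors in a (possibly degenerate) Euclidean space. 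This dichotomy separates the bounded spherical/Euclidean regime from the hyperbolic regime $C(Q)>4$ of \cite{FT2017,felikson2022mutationfinitequiversrealweights}.

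For the direction $(\Leftarrow)$ I would invoke the Felikson--Tumarkin geometric realization, under which each mutation $\mu_k$ acts on the triple of realizing vectors by an isometric partial reflection. Consequently, the Gram matrix of every $Q'\in[Q]$ remains positive semi-definite. Its off-diagonal entries are then inner products of unit vectors under a positive semi-definite form, so by Cauchy--Schwarz each such entry lies in $[-1,1]$. Every weight of every $Q'\in[Q]$ is therefore bounded by $2$, giving $\|[Q]\|\leq 2$.

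For the direction $(\Rightarrow)$ I would argue the contrapositive, splitting into $C(Q)>4$ and $p>2$. If $C(Q)>4$, the Gram matrix has Lorentzian signature and mutation dynamics obeys a hyperbolic Markov-type recursion: fixing two weights $q,r$ in a cyclic triangle makes the third weight a root of $X^2-(qr)X+(q^2+r^2-C)=0$, and mutation at the opposite vertex swaps this root with $qr-p$. Iterating this Vieta jump along a trajectory that increases the largest weight produces exponential divergence, mirroring the unbounded growth of Markov triples above threshold. If instead $p>2$ with $C(Q)\leq 4$, then $Q$ must be cyclic, because acyclicity and $C(Q)\leq 4$ together force $p^2\leq p^2+q^2+r^2+pqr=C(Q)\leq 4$. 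An analogous Vieta-jumping argument on the resulting non-PSD cyclic Gram matrix then produces an unbounded sub-trajectory.

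The main obstacle I anticipate is the boundary case $p>2$, $C(Q)\leq 4$, in which the hyperbolic Markov recursion admits periodic orbits alongside divergent ones. For instance, mutating the cyclic triangle $(3,3,2)$ (with $C=4$ and $p=3$) at the vertex opposite the edge of weight $p$ returns a quiver with the same weight multiset, so the naive ``mutate opposite the largest edge'' heuristic gets trapped. The analysis must therefore select a mutation sequence that reliably escapes such periodic cycles while respecting the reordering $p\geq q\geq r\geq 0$ after each step. The first ``multivariate analysis'' proof promised in the introduction presumably carries out this selection through explicit numerical bounds on the trajectory, whereas the alternative ``Felikson--Tumarkin'' proof bypasses the subtlety by recasting the dynamics as reflections in an indefinite real form, where the divergence follows directly from the Lorentzian geometry of the realization.
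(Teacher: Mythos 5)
Your positive-semidefinite reformulation is correct and is essentially the dichotomy underlying the paper's second (geometric) proof: the Gram matrix you describe is the quasi-Cartan companion whose signature governs whether the Felikson--Tumarkin realization lives in $S^2/\mathbb{E}^2$ (PSD, weights $|2\cos\theta|\le 2$) or in $\mathbb{H}^2$. Two caveats on your $(\Leftarrow)$ direction, though. First, the claim that mutation preserves PSD-ness is not automatic: the companion mutates by congruence, but one must check that the congruent matrix is again an \emph{admissible} companion of the mutated quiver, and this compatibility is exactly what \cite[Theorem 4.4]{FT2017} provides \emph{for mutation-acyclic classes}. So you still need to rule out that a quiver with $p\le 2$ and $C(Q)\le 4$ is mutation-cyclic; the paper does this via \cite[Lemma 3.3 and Corollary 4.8]{FT2017}, which leave only the Markov quiver, handled separately since it is mutation-finite. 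Without that step your argument is circular in the mutation-cyclic case, where the realization is by points rather than lines and all weights are $2\cosh(d)\ge 2$.

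The genuine gap is the $(\Rightarrow)$ direction, which you leave as an acknowledged open problem: you identify the Vieta-jump $p\mapsto qr-p$ as the mechanism but do not exhibit a mutation sequence that provably diverges, and you worry (correctly) that a naive choice can be periodic, as with $(3,3,2)$. The resolution, which is the analytic core of the paper, is to mutate at the vertex opposite the \emph{smallest} weight, not the largest. For your example $(3,3,2)$ this gives $(3,3,7)\to(3,7,18)\to\cdots$; in general, for a cyclic quiver with a weight $p_0>2$, alternating mutations at the two vertices adjacent to the $p_0$-edge yields the recursion $r_i=p_0q_{i-1}-r_{i-1}$, $q_i=p_0r_i-q_{i-1}$, and one proves $q_i\ge(p_0^2-p_0-1)^i q_0$ with $p_0^2-p_0-1>1$ (\cref{thm:2iffbdd}). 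For the complementary case $C(Q)>4$ with all weights at most $2$, the growth is not obvious at all and requires the quantitative lemmas: first $p>\sqrt{2}$ (\cref{lem:sqrt2}), then the key inequality $pq-r>\tfrac{C(Q)}{4}q$ (\cref{lem:C(Q)/4q}), which feeds into the geometric lower bound $p_iq_i-r_i>(C(Q)/4)^{\lfloor (i+2)/2\rfloor}q_0$ of \cref{lem:mu_*}. Your appeal to ``exponential divergence mirroring Markov triples'' does not substitute for these estimates, because for real weights the jump $pq-r$ can a priori shrink; proving it grows by the uniform factor $C(Q)/4>1$ is precisely where the hypothesis $C(Q)>4$ enters. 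Until that selection-and-growth argument is supplied, the unboundedness half of the theorem is not proved.
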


\noindent We shall provide two different proofs for \cref{thm:detbdd}. In the first one, \Cref{thm:detbdd} will be deduced from the following result, which is the main analytical part of the article:

\begin{prop}\label{thm:main}
Let $Q$ be a rank $3$ quiver with real weights, different from the Markov Quiver, and $[Q]$ its mutation class. Then
$$[Q]\mbox{ is bounded }\Longleftrightarrow \mbox{$[Q]$ is mutation acyclic and $C(Q)\leq 4$.}$$

In fact, more explicitly:
\begin{itemize}
    \item[(i)] If $[Q]$ is bounded then $\|[Q]\|\leq\sqrt{C(Q)}$.
    \item[(ii)] If $[Q]$ is unbounded then starting at any cyclic $Q\in[Q]$ and iteratively mutating at the vertex opposite to the smallest weight produces quivers of arbitrarily large and non-decreasing norm.
\end{itemize}
\end{prop}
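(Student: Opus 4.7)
My plan is to exploit the mutation invariance of the Markov constant $C(Q)$, verified by direct calculation from the rank-3 mutation rule, and to convert the boundedness question into an elementary inequality. The organizing observation is the identity
\[
\max\{p,q,r\}^2 \leq C(Q')
\]
which holds automatically for acyclic $Q'=(p,q,r)$ since $C(Q')=p^2+q^2+r^2+pqr$ is a sum of non-negative terms, and which for cyclic $Q'=(p,q,r)$ with $p\geq q\geq r>0$ is equivalent to $pqr\leq q^2+r^2$, i.e.\ $p\leq q/r+r/q$. By AM-GM, $q/r+r/q\geq 2$, so the cyclic case reduces to proving $p\leq 2$ for every cyclic representative of $[Q]$.

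For part (i): if $Q'\in[Q]$ is acyclic, $\|Q'\|\leq\sqrt{C(Q)}$ follows immediately from the display above and the mutation invariance of $C$. If $Q'=(p,q,r)$ is cyclic, I would prove $p\leq 2$ by contradiction using part (ii): should $p>2$ occur, the iteration in (ii) would produce cyclic quivers of non-decreasing and eventually unbounded norm, which in particular could never reach an acyclic representative (whose norm is bounded by $\sqrt{C(Q)}$), contradicting mutation acyclicity. Granted $p\leq 2$, AM-GM yields $\|Q'\|^2\leq C(Q')=C(Q)$, which is the bound in (i).

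For part (ii): given cyclic $Q=(p,q,r)$ with $p\geq q\geq r$, mutation at vertex 2 (opposite the edge of smallest weight $r$) sends the triple to $(p,q,pq-r)$, which is cyclic exactly when $pq>r$. I would first show that under the unboundedness hypothesis the iteration stays cyclic forever; otherwise we would reach an acyclic representative, whose norm is bounded by $\sqrt{C(Q)}$, forcing boundedness of $[Q]$. Non-decrease of the maximum then comes from a short inequality on the new triple. For strict unboundedness I would argue by compactness: if the orbit were bounded, it would accumulate on the level set $\{C=C(Q)\}\cap\R_{\geq 0}^3$ to a fixed point of the iteration, and the fixed-point equations (e.g.\ $pq-r=r$ together with the mutation symmetry) single out the Markov configuration $(2,2,2)$, which is excluded by hypothesis.

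The main obstacle I foresee is the cyclic case of (i). A one-step mutation analysis is insufficient: the quiver $(2.5,2.5,2.5)$ has $C=3.125\leq 4$ and yet every mutation keeps it cyclic with strictly growing norm, so mutation acyclicity must be invoked globally along the iteration rather than at a single step. Intertwining (i) and (ii) through the monotonicity of the mutation at the vertex opposite the smallest weight appears to be the cleanest way to close the loop.
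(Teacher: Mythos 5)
Your AM--GM observation is genuinely nice and does not appear in the paper: for a cyclic quiver $(p,q,r)$ with $p\ge q\ge r>0$, the bound $p^2\le C$ is equivalent to $p\le q/r+r/q$, so it follows from $p\le 2$ alone. Combined with the paper's Proposition 2.2 (boundedness forces every cyclic representative to have all weights at most $2$), this yields part (i) in two lines and bypasses the paper's induction on the distance to an acyclic representative. Even here, though, your logic slips: a single divergent mutation path does not contradict \emph{mutation acyclicity}, which only asserts that \emph{some} mutation sequence reaches an acyclic quiver; the contradiction you want is with \emph{boundedness}. Likewise, the claim that reaching an acyclic representative forces boundedness of $[Q]$ is false: mutation-acyclic classes with $C(Q)>4$ are unbounded, which is precisely half the content of the theorem.

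The more serious gaps are in the unboundedness half. First, the claim that a cyclic quiver with a weight $p>2$ diverges under the iteration is asserted (by appeal to part (ii), whose hypothesis is unboundedness, not $p>2$) but never proved; the paper proves it via the recursion $q_i\ge(p_0^2-p_0-1)^i q_0$. Second, and decisively, the hypothesis $C(Q)>4$ is never used anywhere in your divergence argument, yet the forward implication requires showing that $C(Q)>4$ forces divergence \emph{even when all weights are at most} $2$; this is the analytic core of the paper (Lemmas 2.4--2.6, culminating in $pq-r>\tfrac{C(Q)}{4}q$ and the geometric growth $(C(Q)/4)^{\lfloor (i+2)/2\rfloor}q_0$) and has no counterpart in your plan. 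Your substitute --- a compactness/fixed-point argument --- is incorrect: the fixed-point equation $pq=2r$ has many non-Markov solutions, e.g.\ $(1,1,\tfrac12)$, so the fixed points do not single out $(2,2,2)$; moreover a bounded orbit of this piecewise-defined, relabelled map need not accumulate at a fixed point at all. Finally, the implication that mutation acyclicity together with $C(Q)\le4$ gives boundedness is not addressed: your AM--GM bound presupposes that all cyclic representatives have weights at most $2$, and under that hypothesis this is exactly what the paper's induction on the distance to an acyclic quiver establishes.
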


\noindent \Cref{thm:main} provides a complete characterization of bounded mutation classes but it is nevertheless not optimal. Indeed, given a quiver $Q$ it still requires being able to decide whether $[Q]$ is mutation acyclic. In contrast, the strength of \Cref{thm:detbdd} is that it allows us to determine boundedness of a mutation class $[Q]$ for any given quiver $Q$ by just using the given quiver itself, and not its mutations.\\

\noindent Part of the non-trivial content of \Cref{thm:main} is the assertion that a mutation-infinite quiver $Q$ with $C(Q)>4$ has an unbounded mutation class $[Q]$. Correspondingly, in \Cref{thm:detbdd}, that a quiver $Q$ with $C(Q)>4$ and real weights $p,q,r\leq 2$ has an unbounded mutation class $[Q]$. Namely, we show that it does not matter how small the given weights $p,q,r\in\R_{\geq0}$ are: if $C(Q)>4$, then $[Q]$ is unbounded. In line with \cref{thm:main}.(ii), we prove these facts by exhibiting explicit mutation sequences that arbitrarily increase quiver weights under these hypotheses. See \cref{lem:mu_*} below and the inequality in \cref{eq:Markov_inequality} for an instance of the lower bounds that appear.\\

In this first proof of \cref{thm:detbdd}, the techniques and arguments are rather elementary, the bounds are explicit, and the proof can be followed without substantial prior knowledge on the subject, which is a strength. At the same time, the arguments are quite lengthy and do not necessarily provide conceptual reasons of why the statements are true. Fortunately, the referee suggested an alternative strategy to prove \cref{thm:detbdd}, based on \cite{FT2017}. In \cite{FT2017}, A.~Felikson and P.~Tumarkin provided a way to model all mutation classes of rank $3$ quivers geometrically, based on spherical, Euclidean and hyperbolic geometries, depending on the case. We will show that \cref{thm:detbdd} can also be proven using these geometric models from \cite{FT2017}, as the referee suggested to us. This second proof helps gaining a conceptual understanding for why \cref{thm:detbdd} holds and, once one assumes the techniques \cite{FT2017} as prior knowledge, it provides a beautifully visual and compelling argument for this boundedness behavior.\\

\noindent {\bf Acknowledgments}. We are truly grateful to the referee for their detailed reading of the manuscript and their suggestions. Specifically, much of the alternative proof in \cref{sec:alternative_proof_mainresults} was sketched by the referee themselves and it has been a joy to learn from and implement it. We are thankful that it can be included in this manuscript, but it should be clear that the merit for the alternative proof belongs to the referee. We are thankful to Scott Neville and Melissa Sherman-Bennett for their helpful comments on a first draft of the manuscript and useful discussions. R.C.~is supported by the National Science Foundation under grants DMS-2505760 and DMS-1942363, and a UC Davis College of L\&S Dean's Fellowship.

\section{First proof of the main results}\label{sec:proof_mainresults}

The first proof of \cref{thm:main} and \cref{thm:detbdd} are structured as follows. \cref{ssec:proof_char} establishes in \cref{thm:2iffbdd} a first characterization of bounded mutation classes. Such characterization depends on the (non)existence of a cyclic quiver with particular weights. \cref{ssec:proof_lemmas} proves two lemmas on the possible weights of rank 3 quivers depending on cyclicity and the Markov constant. \cref{ssec:proof_main} then uses the results from \cref{ssec:proof_char} and \cref{ssec:proof_lemmas} to prove \cref{thm:main}. \cref{ssec:proof_cor} then proves \cref{thm:detbdd} from \cref{thm:main}. Throughout this section, $Q$ is a connected rank 3 quiver with nonnegative real weights.

\subsection{A preliminary characterization}\label{ssec:proof_char} The goal of this subsection is to prove \cref{thm:2iffbdd}. The argument uses the following assertion:

\begin{lem}\label{lem:exists cyclic}
$[Q]$ contains a cyclic quiver.
\end{lem}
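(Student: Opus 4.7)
If $Q$ is already cyclic there is nothing to prove, so assume $Q=(p,q,r)$ is acyclic. My plan is to classify the acyclic possibilities by their underlying undirected graph and, in each case, exhibit a short explicit mutation sequence that produces a cyclic quiver in $[Q]$.

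Since $Q$ is connected of rank $3$, at most one of the three weights can vanish, so the underlying undirected graph is either a triangle (when $p,q,r>0$) or a path (when exactly one weight equals $0$). In the triangle case, any acyclic orientation of a triangle has a unique source, a unique sink, and a unique ``middle'' vertex $v$ with one incoming and one outgoing arrow. I would mutate at $v$: the two arrows incident to $v$ form a directed path through $v$, so the mutation rule reverses those two arrows and contributes a strictly positive term to the weight on the third edge. A direct check shows the resulting three arrows form a directed $3$-cycle. Concretely, if the orientation is $1\to 2$, $2\to 3$, $1\to 3$, then mutating at $v=2$ yields $2\to 1$, $3\to 2$, $1\to 3$ with the weight on $\{1,3\}$ increased from $r$ to $r+pq>0$, which closes the $3$-cycle $1\to 3\to 2\to 1$.

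In the path case, suppose without loss of generality that $q=0$, so the underlying graph is $2-1-3$ with middle vertex $1$. If the two arrows at vertex $1$ form a directed path through $1$ (one incoming, one outgoing), then a single mutation at $1$ creates a new nonzero arrow on the edge $\{2,3\}$ and the same mutation-rule computation shows the three arrows form a directed $3$-cycle. If instead vertex $1$ is a source or a sink of its two incident arrows, I would first mutate at a leaf (say vertex $2$): this reverses the arrow on $\{1,2\}$, leaves the $\{2,3\}$ edge absent, and turns vertex $1$ into the middle of a directed path; a subsequent mutation at $1$ then produces a cyclic quiver by the previous sub-case. The remaining cases $p=0$ and $r=0$ are handled by relabeling.

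The argument is fundamentally a finite enumeration of orientation types combined with direct use of the mutation formula in rank $3$. The main thing to watch for is the sign bookkeeping in the mutation rule: one must check in each sub-case that when the two arrows adjacent to the mutated vertex form a directed path through it, the modification to the opposite edge points in the direction that closes a $3$-cycle, rather than preserving acyclicity. No conceptual obstacle arises beyond this verification.
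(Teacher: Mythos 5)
Your proposal is correct and follows essentially the same route as the paper: reduce to the acyclic case, split according to whether the underlying graph is a triangle or a path, use mutations at leaves to arrange that the middle vertex has one incoming and one outgoing arrow, and then mutate at that middle vertex to close the $3$-cycle. The only differences are cosmetic (labeling of the middle vertex and the order in which the path-type orientations are normalized).
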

\begin{proof}
Since $Q$ is connected, any quiver in $[Q]$ must have at least two non-zero weights. Suppose that a quiver $Q\in[Q]$ is given with exactly two non-zero weights, so $Q$ is one of the following quivers, up to relabeling of vertices and reversing orientation:
    \begin{center}
    \begin{tikzcd}[column sep=1cm, row sep=1cm]
& 2 \ar[rdd] \ar[ldd] \\
& \\
1 && 3 \\
\end{tikzcd}
\hspace{5mm}
    \begin{tikzcd}[column sep=1cm, row sep=1cm]
& 2 \ar[rdd] \\
& \\
1 \ar[ruu] && 3 \\
\end{tikzcd}
\hspace{5mm}
\begin{tikzcd}[column sep=1cm, row sep=1cm]
& 2 \\
& \\
1 \ar[ruu] && 3 \ar[luu] \\
\end{tikzcd}
\end{center}
\vspace{-10mm}
But these three quivers are mutation equivalent, via the mutations $\mu_1$ then $\mu_3$ from left to right, so we can assume without loss of generality that the acyclic quiver $Q$ is the middle quiver. In that case, mutation at vertex $2$ yields the cyclic quiver:
\begin{center}
    \begin{tikzcd}[column sep=1cm, row sep=1cm]
& 2 \ar[rdd, "q"]\\
&\\
1 \ar[ruu, "p"] && 3\\
\end{tikzcd}
$\xleftrightarrow{\text{\hspace{5mm}$\mu_2$}\hspace{5mm}}$
\begin{tikzcd}[column sep=1cm, row sep=1cm]
& 2 \ar[ldd,"p"] \\
& \\
1 \ar[rr,"pq"] && 3 \ar[luu, "q"] \\
\end{tikzcd}
\end{center}
\vspace{-10mm}
If $Q$ had been an acyclic quiver with three nonzero weights then, up to relabeling vertices and reversing orientation, $Q$ is mutation equivalent to 
\vspace{-5mm}
\begin{center}
    \begin{tikzcd}[column sep=1cm, row sep=1cm]
& 2 \ar[rdd,"q"] \\
& \\
1 \ar[rr,"r"] \ar[ruu, "p"] && 3 \\
\end{tikzcd}
\end{center}
\vspace{-10mm}
which is itself mutation equivalent, via $\mu_2$, to the following cyclic quiver:
\begin{center}
    \begin{tikzcd}[column sep=1cm, row sep=1cm]
& 2 \ar[ldd,"p"] \\
& \\
1 \ar[rr,"pq+r"] && 3 \ar[luu, "q"] \\
\end{tikzcd}
\end{center}
\vspace{-10mm}
Either case, we obtain a cyclic quiver in $[Q]$, as required.
\end{proof}

\noindent Here follows a preliminary classification of bounded mutation classes in rank 3:

\begin{prop}\label{thm:2iffbdd}
Let $Q$ be a rank $3$ quiver with real weights. Then
\begin{equation*}
[Q]\mbox{ is unbounded }\Longleftrightarrow \exists Q'\in [Q] \mbox{ with } Q' \mbox{ cyclic and one weight larger than }2.
\end{equation*}
\end{prop}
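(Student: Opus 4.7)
The plan is to treat the two implications separately, using the fact established in \cref{lem:exists cyclic} that $[Q]$ always contains a cyclic representative.

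For the forward direction, I argue the contrapositive: if every cyclic quiver in $[Q]$ has all weights at most $2$, then $\|[Q]\|\leq 2$. Given any acyclic $Q''\in[Q]$ with three nonzero weights $p,q,r$, the calculation in the second half of the proof of \cref{lem:exists cyclic} shows that a single mutation at the middle vertex produces a cyclic quiver in $[Q]$ whose weights are $\{p,q,pq+r\}$. The hypothesis forces $p,q,pq+r\leq 2$ and hence $p,q,r\leq 2$. The case of an acyclic $Q''$ with only two nonzero weights is handled by the analogous reduction in \cref{lem:exists cyclic}, which produces a cyclic quiver in $[Q]$ with a weight equal to the product of the nonzero weights of $Q''$, again forcing those weights to lie in $[0,2]$.

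For the reverse direction, assume $Q'=(p,q,r)\in[Q]$ is cyclic with $p\geq q\geq r$ and $p>2$; connectedness forces $q>0$. I construct a sequence of cyclic quivers $Q^{(0)}=Q',Q^{(1)},\ldots$ in $[Q]$ by iteratively mutating at the vertex opposite the smallest weight, writing the sorted weights of $Q^{(n)}$ as $a_n\geq b_n\geq c_n$. Each mutation replaces $c_n$ by $a_nb_n-c_n$. Two inequalities drive the argument: $a_nb_n\geq 2c_n>c_n$ (from $a_n\geq 2$ and $b_n\geq c_n$), which keeps $Q^{(n+1)}$ cyclic, and $a_nb_n-c_n\geq b_n$ (from $b_n(a_n-1)\geq c_n(a_n-1)\geq c_n$), which forces the new smallest weight to be $c_{n+1}=b_n$. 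Together these show that all three sorted sequences are monotone nondecreasing, with $a_n\geq p>2$ and $b_n\geq q>0$ for every $n$.

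To conclude that $a_n$ is actually unbounded, I track the sum $S_n=a_n+b_n+c_n$, whose increment is $S_{n+1}-S_n=a_nb_n-2c_n$. This is strictly positive throughout, since $a_n>2$ and $b_n\geq c_n$ with $b_n>0$. If $\{S_n\}$ were bounded, monotone convergence would produce limits $a^*\geq b^*=c^*$ (using $c_{n+1}=b_n$) with $a^*>2$, and passing to the limit in $S_{n+1}-S_n\to 0$ would give $b^*(a^*-2)=0$, hence $b^*=0$, contradicting $b^*\geq q>0$. Therefore $S_n\to\infty$ and $a_n\to\infty$, showing $[Q]$ is unbounded. The main technical obstacle is propagating the invariant $a_n\geq 2$ along the iteration, which uses the strict inequality $p>2$ at the initial step in an essential way.
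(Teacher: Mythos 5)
Your argument is correct, but the $(\Longleftarrow)$ direction takes a genuinely different route from the paper's. For $(\Longrightarrow)$ the two arguments coincide up to contraposition: both rest on the observation from \cref{lem:exists cyclic} that one mutation turns an acyclic quiver with weights $p,q,r$ into a cyclic one with weights $p,q,pq+r$, so a weight exceeding $2$ anywhere in $[Q]$ forces one on a cyclic representative. For $(\Longleftarrow)$, the paper keeps the largest weight $p_0>2$ fixed by alternately mutating at the two vertices incident to it, and derives the explicit exponential lower bound $q_i\geq(p_0^2-p_0-1)^i q_0$ with base greater than $1$. You instead mutate greedily at the vertex opposite the smallest weight --- the very strategy the paper deploys only later, in \cref{lem:mu_*}, for the $C(Q)>4$ case --- and replace the quantitative estimate with a soft divergence argument: your invariants $a_nb_n>c_n$ (preserving cyclicity) and $a_nb_n-c_n\geq b_n$ (forcing $c_{n+1}=b_n$ and monotonicity of all three sorted weight sequences) are correctly established from $a_n\geq p>2$ and $b_n\geq c_n$, and the monotone-convergence contradiction is sound, since boundedness of $S_n$ would force the increment $a_nb_n-2c_n$ to converge to $b^*(a^*-2)$ with $a^*>2$ and $b^*\geq q>0$, which cannot vanish. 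What the paper's route buys is an explicit growth rate, which it reuses for the constructive statement in \cref{thm:main}.(ii); what yours buys is a shorter, essentially computation-free proof of divergence, at the price of giving no rate. It is also a pleasing unification, in that the same greedy mutation scheme handles both this proposition and \cref{lem:mu_*}.
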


\begin{proof}
For $(\Longrightarrow)$ we proceed as follows. Since $[Q]$ is unbounded, there exists a quiver $Q'\in [Q]$ with one weight greater than $2$. If $Q'$ is cyclic, then we are done. Suppose thus that $Q'$ is acyclic. The same proof of \cref{lem:exists cyclic} implies that there must be a cyclic quiver $Q''$ with one weight greater than 2. Indeed, the following two cases cover all such possibilities. If $Q'$ is the following quiver on the left
    \vspace{-5mm}
    \begin{center}
    \begin{tikzcd}[column sep=1cm, row sep=1cm]
Q' & 2 \ar[rdd, "q"]\\
&\\
1 \ar[ruu, "p>2"] && 3\\
\end{tikzcd}
$\xleftrightarrow{\text{\hspace{5mm}$\mu_2$}\hspace{5mm}}$
\begin{tikzcd}[column sep=1cm, row sep=1cm]
Q'' & 2 \ar[ldd,"p>2"] \\
& \\
1 \ar[rr,"pq"] && 3 \ar[luu, "q"] \\
\end{tikzcd}
\end{center}
\vspace{-10mm}
then $Q''$ is chosen to be the quiver on its right. If $Q'$ is instead the left quiver in
\vspace{-5mm}
\begin{center}
    \begin{tikzcd}[column sep=1cm, row sep=1cm]
Q' & 2 \ar[rdd, "q"]\\
&\\
1 \ar[ruu, "p>2"] \ar[rr,"r"] && 3\\
\end{tikzcd}
$\xleftrightarrow{\text{\hspace{5mm}$\mu_2$}\hspace{5mm}}$
\begin{tikzcd}[column sep=1cm, row sep=1cm]
Q'' & 2 \ar[ldd,"p>2"] \\
& \\
1 \ar[rr,"pq+r"]&& 3 \ar[luu, "q"] \\
\end{tikzcd}
\end{center}
\vspace{-10mm}
then $Q''$ is chosen to be the corresponding quiver to its right. This concludes $(\Longrightarrow)$.\\

Let us prove $(\Longleftarrow)$. For that, we assume that $Q'=(p_0,q_0,r_0)\in [Q]$ is the given cyclic quiver with weights satisfying $0< r_0\leq q_0\leq p_0$ and $p_0>2$. Without loss of generality, we can assume that $Q'$ is of the following form:
\begin{center}
\begin{tikzcd}[column sep=1cm, row sep=1cm]
Q' & 2 \ar[rdd,"q_0"] \\
& \\
1 \ar[ruu,"p_0"]&& 3 \ar[ll, "r_0"] \\
\end{tikzcd}
\end{center}
\vspace{-10mm}
The goal is to show that $[Q]$ is unbounded. We prove that by iteratively mutating at vertices $1$ and $2$, which we momentarily show forces the weights of the quivers in the sequence to arbitrarily increase. Specifically, let us
define the numbers $q_i,r_i\in\R_{\geq0}$ recursively by
$$q_i:=p_0r_i-q_{i-1},\quad r_i:=p_0q_{i-1}-r_{i-1},\quad i>0.$$
We claim that these numbers are the weights of the quivers appearing in the mutation sequence starting at $Q'$ and alternately mutating at vertices $2$ and $1$. The beginning of the sequence is:
\begin{center}
\begin{tikzcd}[column sep=1cm, row sep=1cm]
Q' & 2 \ar[rdd,"q_0"] \\
& \\
1 \ar[ruu,"p_0"]&& 3 \ar[ll, "r_0"] \\
\end{tikzcd}
$\xleftrightarrow{\text{\hspace{5mm}$\mu_2$}\hspace{5mm}}$
    \begin{tikzcd}[column sep=1cm, row sep=1cm]
& 2 \ar[ldd, "p_0"]\\
&\\
1 \ar[rr, "r_1=p_0q_0-r_0"] && 3 \ar[luu, "q_0"] \\
\end{tikzcd}
$\xleftrightarrow{\text{\hspace{5mm}$\mu_1$}\hspace{5mm}}$
\begin{tikzcd}[column sep=1cm, row sep=1cm]
& 2 \ar[rdd,"q_1=p_0r_1-q_0"] \\
& \\
1 \ar[ruu,"p_0"]&& 3 \ar[ll, "r_1"] \\
\end{tikzcd}
\end{center}
\vspace{-10mm}
To verify this claim, we need to check that the arrows $q_i,r_i$ are indeed going in the direction that retains cyclicity, i.e. $q_i,r_i>0$ for all $i>0$. In addition, we also assert that the inequalities $q_i>r_i>q_{i-1}$ hold for all $i>0$. We prove such inequalities
\begin{equation}\label{eq:inequality1}
q_i>r_i>q_{i-1}>0
\end{equation}
by induction on $i\in\N$. The base case is $i=1$: since $r_0\leq q_0$ and $p_0>2$, we must have
$$r_1=p_0q_0-r_0>2q_0-q_0=q_0>0,\mbox{ and }q_1=p_0r_1-q_0>2r_1-r_1=r_1>0.$$
For the inductive step, we assume that $q_k>r_k>q_{k-1}>0$ for some $k\in \N$. Then it follows that
$$r_{k+1}=p_0q_k-r_k>2q_k-q_k=q_k>0,\mbox{ and }q_{k+1}=p_0r_{k+1}-q_k>2r_{k+1}-r_{k+1}=r_{k+1}>0,$$
which proves \cref{eq:inequality1}. To conclude that $[Q]$ is unbounded, it suffices to show that these weights $q_i,r_i$ increase without bound as we iterate the mutations at $2$ and $1$, i.e.~as $i\to\infty$. This is a consequence of the following:
\begin{claim}\label{claim1}
        $\displaystyle \lim_{i\rightarrow{\infty}}q_i=\infty$.
    \end{claim}
\begin{proof}[Proof of \cref{claim1}] Consider the quantity $\mathbf{p}:=p_0^2-p_0-1$. The hypothesis $p_0>2$ implies $\mathbf{p}>1$ and thus $\displaystyle \lim_{i\rightarrow \infty}\mathbf{p}^i=\infty$. We claim that the sequence of weights $(q_i)$ satisfies \begin{equation}\label{eq:inequality2}
q_i\geq\mathbf{p}^iq_0,\quad\mbox{for all }i>0.
\end{equation}
\Cref {eq:inequality2} can be established by induction on $i$, as follows. The base case is $i=1$, and since $r_0\leq q_0$, we have 
    \[
    q_1=p_0r_1-q_0=p_0^2q_0-p_0r_0-q_0\geq p_0^2q_0-p_0q_0-q_0=(p_0^2-p_0-1)q_0=\mathbf{p}^1q_0.
    \]
For the inductive step, we assume that $q_k\geq \mathbf{p}^kq_0$ for some $k\in \N$. Given that we have shown previously that $r_k<q_k$, it follows that
    \[
    q_{k+1}=p_0r_{k+1}-q_k=p_0^2q_k-p_0r_k-q_k>p_0^2q_k-p_0q_k-q_k=(p_0^2-p_0-1)q_k\geq\mathbf{p}\cdot \mathbf{p}^kq_0=\mathbf{p}^{k+1}q_0,
    \]
    which implies \cref{eq:inequality2}. Since $\displaystyle \lim_{i\rightarrow \infty}\mathbf{p}^i=\infty$, it follows that
    $$\displaystyle \lim_{i\rightarrow \infty}q_i>\displaystyle q_0\cdot \lim_{i\rightarrow \infty}\mathbf{p}^i=q_0\cdot \infty=\infty,$$
    as required.
\end{proof}
\noindent The fact that $[Q]$ is unbounded now follows from Claim \ref{claim1}, as we constructed a sequence of quivers in $[Q]$ whose coefficients increase arbitrarily as we iteratively mutate at the vertices $2$ and $1$.
\end{proof}

\subsection{Two quick lemmas}\label{ssec:proof_lemmas} A caveat of \cref{thm:2iffbdd} is that it is in generally challenging to determine the existence or non-existence of a cyclic quiver with one weight larger than $2$ in a given mutation class $[Q]$. This makes \cref{thm:2iffbdd} difficult to use in practice. Hence, we are motivated to further explore the behavior of weights of rank 3 quivers, which will lead to \cref{thm:main}, improving \cref{thm:2iffbdd}. The two necessary lemmas that we shall use in the proof \cref{thm:main} read as follows:

\begin{lem}\label{lem:sqrt2}
Let $Q=(p,q,r)$ be a cyclic quiver with real weights $0<r\leq q\leq p$.\\
If $C(p,q,r)>4$, then $p>\sqrt{2}$.
\end{lem}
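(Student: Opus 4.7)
The plan is to argue by contradiction: assume $p \leq \sqrt{2}$ (so that $0 \leq r \leq q \leq p \leq \sqrt{2}$) and show this forces $C(p,q,r) \leq 4$.

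The key observation is that, with $p$ and $q$ fixed, the Markov constant
\[
C(p,q,r) = r^2 - (pq)\, r + (p^2 + q^2)
\]
is a quadratic in $r$ with positive leading coefficient. Hence its maximum on the interval $r \in [0,q]$ is attained at one of the two endpoints $r = 0$ or $r = q$. So it suffices to bound $C$ at these two endpoints under the standing assumption $p \leq \sqrt{2}$ and $q \leq p$.

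First, at $r = 0$, we have $C = p^2 + q^2 \leq 2p^2 \leq 4$, immediately. Second, at $r = q$, we get
\[
C = p^2 + 2q^2 - pq^2 = p^2 + q^2(2-p).
\]
Since $q \leq p$ and $2 - p \geq 0$, this is at most $p^2 + p^2(2 - p) = p^2(3 - p)$. The single-variable function $h(p) = p^2(3-p)$ satisfies $h'(p) = 3p(2-p) \geq 0$ on $[0,2]$, so $h$ is increasing on $[0,\sqrt{2}]$ and thus $h(p) \leq h(\sqrt{2}) = 6 - 2\sqrt{2} < 4$.

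Combining the two cases, $C(p,q,r) \leq 4$, contradicting the hypothesis $C(Q) > 4$. Therefore $p > \sqrt{2}$, as claimed. I do not anticipate a serious obstacle here: the statement is essentially a routine calibration exercise for the cubic $p^2 + q^2 + r^2 - pqr$ on the box $[0,\sqrt{2}]^3$ restricted to $r \leq q \leq p$. The only point demanding a hair of care is recognizing that, as a quadratic in $r$, the maximum on a closed interval lies at an endpoint, which reduces a three-variable problem to two much simpler subcases.
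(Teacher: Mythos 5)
Your proof is correct. Let me verify the key steps: as a quadratic in $r$ with leading coefficient $1>0$, $C$ is convex in $r$, so its maximum on $[0,q]$ is indeed at an endpoint; at $r=0$ you get $p^2+q^2\leq 2p^2\leq 4$; at $r=q$ you get $p^2+q^2(2-p)\leq p^2(3-p)=h(p)$ with $h'(p)=3p(2-p)\geq 0$ on $[0,2]$, so $h(p)\leq h(\sqrt{2})=6-2\sqrt{2}<4$. Both endpoint values are $\leq 4$, contradicting $C(Q)>4$. (Enlarging the domain to include $r=0$ is harmless since you only need an upper bound.)

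Your route differs from the paper's in the order and mechanism of the variable elimination. The paper also argues by contradiction and maximizes $C$ over the box $\{0<r\leq q\leq p\leq\sqrt{2}\}$, but it proceeds in the order $p$, then $q$, then $r$: it shows $\partial_p C=2p-qr>0$ to push $p$ up to $\sqrt{2}$, then $\partial_q f=2q-\sqrt{2}r>0$ to push $q$ up to $\sqrt{2}$, and only at the last stage uses convexity of the single-variable parabola $g(r)=4+r^2-2r$ to locate the maximum at the endpoints of $[0,\sqrt{2}]$. You instead eliminate $r$ first by convexity of the quadratic, which collapses the problem onto the two faces $r=0$ and $r=q$, each handled by a one-line inequality or a one-variable calculus check. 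The two arguments are of comparable difficulty and both are entirely elementary; yours is somewhat shorter and avoids verifying sign conditions on partial derivatives throughout the interior of the region, while the paper's sequential monotonicity argument has the mild advantage of identifying the unique near-extremal corner $(\sqrt{2},\sqrt{2},0)$ explicitly, a fact it reuses in the proof of the subsequent Lemma \ref{lem:C(Q)/4q}.
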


\begin{proof}
Let us argue by contradiction, assuming that $p\leq \sqrt{2}$. Since $Q$ is cyclic, $C(Q)$ reads
$$C(p,q,r)=p^2+q^2+r^2-pqr.$$
Considered as a real smooth function of $p,q,r$, we have $\dd_p C=2p-qr>0$ since $q,r\leq p$ and we are assuming $p\leq\sqrt{2}$. Thus, $C(p,q,r)$ is a strictly increasing function of $p$ and it attains its maximum when $p$ is maximized, that is, when $p=\sqrt{2}$. If we write $f(q,r):=C(\sqrt{2},q,r)=2+q^2+r^2-\sqrt{2}qr$, this implies
$$C(p,q,r)\leq f(q,r).$$
Note that $\dd_qf=2q-\sqrt{2}r>0$ because $2>\sqrt{2}$ and $q\geq r$. Thus $f(q,r)$ is a strictly increasing function of $q$ and it attains its maximum when $q$ is maximized. Since we have $q\leq p\leq\sqrt{2}$, we can write $g(r):=C(\sqrt{2},\sqrt{2},r)=2+2+r^2-2r=4+r^2-2r$ and the following inequality will hold
$$C(p,q,r)\leq f(q,r)\leq g(r).$$
Then $g(r)$ is maximized in the same way: $\dd_rg=2r-2>0$ if $r>1$, and $\dd_rg<0$ if $r<1$. Hence, $g(r)$ is increasing for $r>1$ and decreasing for $r<1$. Since $0<r\leq q\leq p\leq \sqrt{2}$, in the interval $[0,\sqrt{2}]$ the function $g(r)$ attains its global maximum at the boundary, i.e.~either at $r=\sqrt{2}$ or $r=0$. Note that $g(\sqrt{2})=4+2-2\sqrt{2}<4=g(0)$, thus we have that $g(r)<g(0)=4$ in the interval $(0,\sqrt{2}]$. Hence, 
    \[
    C(p,q,r)\leq f(q,r)\leq g(r)<4.
    \]
    This contradicts the hypothesis $C(Q)>4$, and thus we must have had that $p>\sqrt{2}$, as required.
\end{proof}

Lemma \ref{lem:sqrt2} gives a lower bound for $p$ whenever $C(p,q,r)>4$: this has an important role in the proof of the upcoming \cref{lem:C(Q)/4q}, and ultimately \cref{thm:main}. Appropriately used, this next lemma will provide a way to construct a sequence of mutations with strictly increasing weights:

\begin{lem}\label{lem:C(Q)/4q}
Let $Q=(p,q,r)$ be a cyclic quiver with weights $0<r\leq q\leq p<2$. If $C(p,q,r)>4$, then
$$\displaystyle pq-r>\frac{C(Q)}{4}q.$$
\end{lem}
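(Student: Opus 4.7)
The approach is to reduce the inequality to an equivalent polynomial form and then exploit the hypothesis $C(Q)>4$ together with \cref{lem:sqrt2}.

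Multiplying $pq-r>\tfrac{C(Q)}{4}q$ by $4$ and substituting $C=p^2+q^2+r^2-pqr$, the inequality rearranges to the equivalent form
\[
q(4p-C) > 4r.
\]
So two subtasks remain: verifying $4p-C>0$, and then quantitatively comparing $q(4p-C)$ with $4r$.

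For the first subtask I would use the one-variable calculus method of \cref{lem:sqrt2}. Fixing $p$ and maximizing $C(p,q,r)$ over $q,r$ with $0<r\le q\le p$, the partials $\partial_q C=2q-pr$ and $\partial_r C=2r-pq$ push the maximizer to the boundary and yield $C<2p^2$; combined with $p<2$ this gives $C<2p^2<4p$, hence $4p-C>0$. The same monotonicity tool supplies the auxiliary bound $r\le pq/2$: otherwise $r\in(pq/2,q]$, on which $\partial_r C>0$, so $C\le C(p,q,q)=p^2+q^2(2-p)\le p^2(3-p)<4$ for $p<2$, contradicting $C>4$.

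The main obstacle is the quantitative step $q(4p-C)>4r$. I would rewrite it as $4\bigl(q(p-1)-r\bigr) > q(C-4)$ and substitute $t=r/q\in(0,1]$, reducing to a polynomial inequality in $(p,t)$ with $q$ a parameter. I would then analyze the boundary $C=4$ of the admissible region, on which the inequality collapses to $q(p-1)\ge r$; this boundary inequality holds throughout the closed region $\{0\le r\le q\le p\le 2\}\cap\{C=4\}$, with equality only at the point $(p,q,r)=(2,2,2)$, which is excluded by the strict hypothesis $p<2$. A perturbation or direct polynomial argument then propagates the strict inequality into the open region $\{C>4\}$. The delicate feature is that both sides of the original inequality vanish simultaneously as $(p,q,r)\to(2,2,2)$, so the strict hypothesis $p<2$ is indispensable.
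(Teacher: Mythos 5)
Your setup is sound: the algebraic reduction to $q(4p-C)>4r$, equivalently $4\bigl(q(p-1)-r\bigr)>q(C-4)$, is correct, and the preliminary facts ($4p-C>0$, and $r\le pq/2$, the latter of which you never actually use) follow by the monotonicity arguments you indicate, given that \cref{lem:sqrt2} supplies $p>\sqrt{2}$. The problem is the final step, which is where the entire content of the lemma lives. You verify the weaker inequality $q(p-1)>r$ on the level set $\{C=4\}$ and then assert that "a perturbation or direct polynomial argument propagates the strict inequality into the open region $\{C>4\}$." No such argument is given, and a perturbation argument cannot work as stated: the region $\{C>4\}$ is not contained in any neighborhood of $\{C=4\}$. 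For instance, with $p=q$ close to $2$ and $r\to 0$ one has $C\to 2p^2\approx 8$, far from the boundary, and along the path of decreasing $r$ the right-hand side $q(C-4)$ eventually grows \emph{faster} than the left-hand side $4\bigl(q(p-1)-r\bigr)$ (compare $\partial_{(-r)}$ of the two sides: $q(pq-2r)$ versus $4$, and $q(pq-2r)>4$ for small $r$ when $p,q$ are near $2$). So local propagation from the boundary genuinely fails; what saves the inequality is a global fact, e.g.\ that for fixed $p,q$ the difference $h(r):=4\bigl(q(p-1)-r\bigr)-q(C-4)$ is concave in $r$, so it suffices to check the two endpoints $r=0$ (where $h=q\bigl(p(4-p)-q^2\bigr)\ge 2pq(2-p)>0$) and the root $r=r_-$ of $C=4$ (your boundary case). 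That endpoint/concavity argument, or something equivalent, is the missing idea. A secondary slip: on $\{C=4\}\cap\{0\le r\le q\le p\le 2\}$ the equality locus of $q(p-1)\ge r$ is the whole segment $\{p=2,\ q=r\}$, not only the point $(2,2,2)$; this does not damage your argument (all equality cases have $p=2$, excluded by hypothesis), but the claim as written is false.

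For comparison, the paper avoids the level-set analysis entirely: it sets $f(p,q,r)=(pq-r)-\frac{C(Q)}{4}q$, checks $\partial_p f=q\bigl(1-\tfrac14(2p-qr)\bigr)>0$ for $p<2$, and uses \cref{lem:sqrt2} to push $p$ down to $\sqrt{2}$, where $C\le 4$ pins the extremal configuration to $(\sqrt{2},\sqrt{2},0)$ with $f=2-\sqrt{2}>0$. In other words, it reduces in the $p$-direction toward the face $p=\sqrt{2}$ rather than toward the hypersurface $C=4$. Your decomposition is a legitimate alternative route and arguably isolates the difficulty more cleanly, but as submitted it stops precisely at that difficulty.
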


\begin{proof}
Consider the smooth function
$$f:\R^3_{\geq0}\longrightarrow\R,\quad f(p,q,r):=(pq-r)-\frac{C(Q)}{4}q,$$
which we need to show is positive. Let $g(p,q,r):=\dd_pf=q-q\cdot \frac14(2p-qr)$. Since $p<2$, we have $2p-qr<4-qr<4$ and hence $g(p,q,r)>0$ is positive and $f$ is increasing as a function of $p$. By Lemma \ref{lem:sqrt2}, the hypothesis $C(p,q,r)>4$ implies the inequality $\sqrt{2}< p<2$. Altogether, for any $p\in(\sqrt{2},2)$, $f(p,q,r)>f(\sqrt{2},q,r)$ and so it suffices to show $f(\sqrt{2},q,r)>0$.
By Lemma \ref{lem:sqrt2} again, if $p=\sqrt{2}$ then $C(p,q,r)\leq 4$, and thus we inspect this extremal case of $f(\sqrt{2},q,r)$, for $q,r$ satisfying $0\leq r\leq q\leq \sqrt{2}$, and $C(\sqrt{2},q,r)=4$. Since $C(\sqrt{2},q,r)=2+q^2+r^2-\sqrt{2}qr$, we have that
$$\dd_qC(\sqrt{2},q,r)=2q-\sqrt{2}r>0.$$
Thus, $C(\sqrt{2},q,r)$ is increasing as a function of $q$, hence it attains its maximum at $q=\sqrt{2}$ with value $C(\sqrt{2},\sqrt{2},r)=4+r^2-2r=4+r(r-2)$. Since $r\in(0,\sqrt{2}]$, $r(r-2)<0$ and so $C(p,q,r)<C(\sqrt{2},\sqrt{2},0)=4$. In particular, the only remaining extremal point $(\sqrt{2},q,r)$ satisfying $C(\sqrt{2},q,r)=4$ is $(\sqrt{2},\sqrt{2},0)$, and the value of $f$ is
$$f(\sqrt{2},\sqrt{2},0)=2-\sqrt{2}\cdot \frac44=2-\sqrt{2}>0.$$
In conclusion, under the hypothesis $C(p,q,r)>4$, we indeed have $f(p,q,r)>f(\sqrt{2},\sqrt{2},0)>0$.
\color{black}
\end{proof}

\subsection{Proof of \Cref{thm:main}}\label{ssec:proof_main} Let us first prove the implication $(\Longleftarrow)$. If $Q$ is the Markov Quiver, then $[Q]$ is tautologically bounded. Therefore, we assume $[Q]$ is mutation acyclic and $C(Q)\leq 4$. We will now show that $[Q]$ is bounded by $\sqrt{C(Q)}$, as stated in \cref{thm:main}.(i).

Let $(\alpha,\beta,\gamma)\in [Q]$ be an acyclic quiver. Then $\alpha^2+\beta^2+\gamma^2+\alpha\beta\gamma=C(\alpha,\beta,\gamma)=C(Q)$, and it follows that $\alpha^2,\beta^2,\gamma^2\leq C(Q)$, hence $\alpha,\beta,\gamma\leq \sqrt{C(Q)}$. Thus the upper bound $\sqrt{C(Q)}$ holds for acyclic quivers with $C(Q)\leq 4$. Let $Q=(p,q,r)\in [Q]$ be cyclic, and let $d(p,q,r)$ be the minimum number of mutations required to transform $Q=(p,q,r)$ into an acyclic quiver. Such $d$ is finite because of the hypothesis that $[Q]$ is mutation acyclic. We show that $\sqrt{C(Q)}$ is an upper bound by induction on $d$.

\indent The base case is $d=1$, i.e.~ $Q=(p,q,r)$ is one mutation away from an acyclic quiver $Q'=(\alpha,\beta,\gamma)$. Since $Q'$ is acyclic, we have already shown $\alpha,\beta,\gamma\leq \sqrt{C(Q)}$. Noting that a single mutation to $(p,q,r)$ preserves at least two weights, we may assume without loss of generality that $q,r\leq \sqrt{C(Q)}$, and we need to show $p\leq \sqrt{C(Q)}$. We argue by contradiction, so we assume that $p>\sqrt{C(Q)}$. We have $C(Q)=p^2+q^2+r^2-pqr$: we now want to get a lower bound for $C(p,q,r)$, and ultimately for $p$. Let $f(p):=p^2+q^2+r^2-pqr$ be considered as a smooth real function of $p$, then
\begin{equation}\label{eq:f_increasing}
f'(p)=2p-qr>0 \iff p>\frac{qr}{2}.
\end{equation}
By assumption, $C(Q)\leq 4$ and thus $\sqrt{C(Q)}\leq 2$. Since we also have $q,r\leq \sqrt{C(Q)}$, we obtain
$$\frac{qr}{2}\leq \frac{C(Q)}{2}\leq\frac{C(Q)}{\sqrt{C(Q)}}=\sqrt{C(Q)}<p.$$
This implies \ref{eq:f_increasing}, that is, $f(p)$ is strictly increasing and thus $f(p)$ is minimized when $p$ is minimized. We are assuming that $p>\sqrt{C(Q)}$ and therefore we have the sequence of implications:
    \begin{align*}
        &f(p)=p^2+q^2+r^2-pqr>C(Q)+q^2+r^2-\sqrt{C(Q)}qr\\
        &\implies 0>q^2+r^2-2qr+2qr-\sqrt{C(Q)}qr\\
        &\implies 0>(q-r)^2+\left(2-\sqrt{C(Q)}\right)qr.
    \end{align*}
Since $(q-r)^2\geq 0$, and $\sqrt{C(Q)}\leq 2$ implies $\left(2-\sqrt{C(Q)}\right)qr\geq 0$, the last inequality implies
$$0>(q-r)^2+\left(2-\sqrt{C(Q)}\right)qr\geq0$$
which is a contradiction. Hence, $p\leq \sqrt{C(Q)}$ as required, concluding the base case.\\

\noindent For the induction step, we assume the statement to be true for any $d$ with $1\leq d\leq n\in \N$, and let $Q'=(p',q',r')\in [Q]$ be a cyclic quiver with $d(p',q',r')=n+1$. Let $(\alpha,\beta,\gamma)\in [Q]$ be a cyclic quiver with $d(\alpha,\beta,\gamma)=n$ that is one mutation from $(p',q',r')$. By the induction hypothesis, we have $\alpha,\beta,\gamma\leq \sqrt{C(Q)}$. Since one mutation preserves at least two weights, we assume without loss of generality that $q',r'\leq \sqrt{C(Q)}$. Then, applying the same argument as in the base case, we readily conclude $p'\leq \sqrt{C(Q)}$.
    
\noindent To summarize, we have shown that all acyclic and cyclic quivers in $[Q]$ are bounded by $\sqrt{C(Q)}$. This concludes the proof of \cref{thm:main}.(i) and the implication $(\Longleftarrow)$.\\ 

Let us prove the implication $(\Longrightarrow)$, which we show by contradiction, and the statement in \cref{thm:main}.(ii). For that, we consider two cases, the most interesting being the second case:

    \textbf{Case 1.} Suppose that {\it $[Q]$ is bounded, mutation cyclic, and $Q$ is not the Markov Quiver.} Since $[Q]$ is bounded and mutation cyclic, \cref{thm:2iffbdd} implies that any quiver $Q'=(p,q,r)\in [Q]$ with $0<r\leq q\leq p$ must have $p,q,r\leq 2$. If $p=q=r=2$, then $Q'$ is the Markov quiver, which is a contradiction. Thus, we must have $r<2$. By \cite[Lemma 3.3]{FT2017}, it follows that $[Q]$ is mutation acyclic, which is a contradiction. Therefore, we conclude that if $[Q]$ is bounded and $Q$ is not the Markov Quiver, then $[Q]$ is mutation acyclic, establishing the mutation acyclic part of \cref{thm:main}.(2) in this case.

    \textbf{Case 2.} Suppose that {\it $[Q]$ is bounded and $C(Q)>4$.} Note that we do not need the assumption that $Q$ is not the Markov Quiver, since the Markov Quiver has Markov Constant equal to $4$. The intuitive idea is that given any cyclic quiver $Q\in[Q]$, we will manage to use the inequality $C(Q)>4$ to produce an arbitrarily long sequence of mutations such that the weights increase arbitrarily. Such mutation sequence depends on the starting quiver $Q\in[Q]$ to which we apply the procedure, which makes this argument finer than that in the proof of \cref{thm:2iffbdd}. This is done via the following lemma.

\setcounter{claim}{0}
\begin{lem}\label{lem:mu_*}
Assume $Q=(p_0,q_0,r_0)$ is a cyclic quiver with $0<r_0\leq q_0\leq p_0\leq 2$ and $C(Q)>4$. Define $\mu_*(Q)=(p_1,q_1,r_1)$ to be the mutation of $Q$ at the vertex opposite the smallest weight, then relabeling so that $0<r_1\leq q_1\leq p_1\leq 2$. Recursively define
$$\mu_*^i(Q)=(p_i,q_i,r_i):=\mu_*(\mu_*^{i-1}(Q))$$
with the same properties that $0<r_i\leq q_i\leq p_i\leq 2$, and $p_i\geq p_{i-1}q_{i-1}-r_{i-1}$. Then for all $i\geq0$, the following inequality holds:
\begin{equation}\label{eq:Markov_inequality}
\displaystyle p_iq_i-r_i>\left(\frac{C(Q)}{4}\right)^{\left\lfloor\frac{i+2}{2}\right\rfloor}q_0.
\end{equation}
\end{lem}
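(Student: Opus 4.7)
The plan is to proceed by induction on $i\geq 0$, combining \cref{lem:C(Q)/4q} with an explicit description of how the sorted weights evolve under $\mu_*$. For the base case $i=0$, note that $\lfloor(0+2)/2\rfloor=1$, so the inequality reads $p_0q_0-r_0>(C(Q)/4)\,q_0$, which is exactly \cref{lem:C(Q)/4q} applied to $Q$.

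For the inductive step, I first unpack the mutation. In a cyclic rank 3 quiver with sorted weights $0<r\leq q\leq p\leq 2$, mutating at the vertex opposite the edge of weight $r$ replaces $r$ by $pq-r$ and keeps $p,q$ unchanged; by \cref{lem:C(Q)/4q} one has $pq-r>(C(Q)/4)\,q>q>0$, so cyclicity is preserved and the new weight strictly exceeds the middle old weight. After resorting,
\[
r_i=q_{i-1},\qquad \{q_i,p_i\}=\bigl\{p_{i-1},\;p_{i-1}q_{i-1}-r_{i-1}\bigr\},
\]
with $p_i$ the larger of the two; in particular, $p_i\geq p_{i-1}q_{i-1}-r_{i-1}$ as asserted in the lemma.

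The main step of the induction is to carry two coupled lower bounds in parallel:
\[
q_i\;\geq\;(C(Q)/4)^{\lfloor i/2\rfloor}q_0,\qquad p_i\;\geq\;(C(Q)/4)^{\lceil i/2\rceil}q_0,\qquad i\geq 0.
\]
Both hold for $i=0$ since $p_0\geq q_0$ and the exponents vanish. For the inductive step, applying \cref{lem:C(Q)/4q} to $(p_{i-1},q_{i-1},r_{i-1})$ gives
\[
p_{i-1}q_{i-1}-r_{i-1}\;>\;\tfrac{C(Q)}{4}\,q_{i-1}\;\geq\;(C(Q)/4)^{\lfloor(i-1)/2\rfloor+1}q_0\;=\;(C(Q)/4)^{\lceil i/2\rceil}q_0,
\]
while the inductive bound on $p_{i-1}$ reads $p_{i-1}\geq (C(Q)/4)^{\lceil(i-1)/2\rceil}q_0=(C(Q)/4)^{\lfloor i/2\rfloor}q_0$, using the elementary identities $\lfloor(i-1)/2\rfloor+1=\lceil i/2\rceil$ and $\lceil(i-1)/2\rceil=\lfloor i/2\rfloor$ valid for $i\geq 1$. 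Since $C(Q)/4>1$, both candidates for $q_i=\min$ are at least $(C(Q)/4)^{\lfloor i/2\rfloor}q_0$, and $p_i=\max$ is at least $(C(Q)/4)^{\lceil i/2\rceil}q_0$, closing the induction. Finally, \cref{lem:C(Q)/4q} applied to $(p_i,q_i,r_i)$ gives
\[
p_iq_i-r_i\;>\;\tfrac{C(Q)}{4}\,q_i\;\geq\;(C(Q)/4)^{\lfloor i/2\rfloor+1}q_0\;=\;(C(Q)/4)^{\lfloor(i+2)/2\rfloor}q_0,
\]
which is the claimed inequality.

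The main obstacle is the alternation encoded in $\lfloor(i+2)/2\rfloor$: the exponent grows by one only every other step, reflecting that only one of the two candidates $\{p_{i-1},\,p_{i-1}q_{i-1}-r_{i-1}\}$ for $q_i$ picks up an extra factor of $C(Q)/4$ at each stage. Tracking the ordered pair $(q_i,p_i)$ via coupled floor and ceiling exponents, rather than following $p_iq_i-r_i$ alone, is what fuses the two possible relabelings into a single uniform bound and avoids a case split at every step.
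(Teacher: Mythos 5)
Your proof is correct and rests on the same engine as the paper's: an induction on $i$ driven by \cref{lem:C(Q)/4q}, with the exponent advancing every other step because only one of the two candidates $\{p_{i-1},\,p_{i-1}q_{i-1}-r_{i-1}\}$ picks up a factor of $C(Q)/4$ at each stage. The only difference is organizational: the paper inducts directly on the quantity $p_iq_i-r_i$ with base cases $i=0,1$, a two-case split according to the relabeling, and a look-back to step $n-1$ via $p_n\geq p_{n-1}q_{n-1}-r_{n-1}$, whereas you carry the strengthened hypothesis $q_i\geq(C(Q)/4)^{\lfloor i/2\rfloor}q_0$ and $p_i\geq(C(Q)/4)^{\lceil i/2\rceil}q_0$, which absorbs the case split into a single $\min/\max$ statement and then yields \cref{eq:Markov_inequality} in one line.
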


\begin{proof}
For the sake of clarity, we first describe this mutation sequence in more detail. Without loss of generality, we may assume $Q$ is the following quiver:
\begin{center}
    \begin{tikzcd}[column sep=1cm, row sep=1cm]
Q & 2 \ar[rdd, "q_0"]\\
&\\
1 \ar[ruu, "p_0"] && 3\ar[ll,"r_0"]\\
\end{tikzcd}
\end{center}
\vspace{-10mm}
Since the vertex opposite the smallest weight $r_0$ is vertex $2$, we have $\mu_*(Q)=\mu_2(p_0,q_0,r_0):=(p_1,q_1,r_1)$ By mutating $Q$ at vertex $2$ we obtain the following quiver on the right:
    \begin{center}
    \begin{tikzcd}[column sep=1cm, row sep=1cm]
Q & 2 \ar[rdd, "q_0"]\\
&\\
1 \ar[ruu, "p_0"] && 3\ar[ll,"r_0"]\\
\end{tikzcd}
$\xleftrightarrow{\text{\hspace{5mm}$\mu_2$}\hspace{5mm}}$
\begin{tikzcd}[column sep=1cm, row sep=1cm]
\mu_2(Q)\hspace{-5mm} & 2 \ar[ldd,"p_0"] \\
& \\
1 \ar[rr,"p_0q_0-r_0"]&& 3 \ar[luu, "q_0"] \\
\end{tikzcd}
\end{center}
\vspace{-10mm}
and so $(p_1,q_1,r_1)=(p_0,p_0q_0-r_0,q_0)$ if $p_0\geq p_0q_0-r_0$, and $(p_1,q_1,r_1)=(p_0q_0-r_0,p_0,q_0)$ if $p_0\leq p_0q_0-r_0$. Note that by \cref{lem:C(Q)/4q}, the weight $p_0q_0-r_0$ will never be the smallest weight $r_1$.
Now, we prove the desired inequality by induction on $i$. For the base case, consider $i=0$ and $i=1$. For $i=0$, Lemma \ref{lem:C(Q)/4q} implies that
$$p_0q_0-r_0>\frac{C(Q)}{4}\cdot q_0=\left(\frac{C(Q)}{4}\right)^{\left\lfloor\frac{0+2}{2}\right\rfloor} q_0.$$
\noindent For $i=1$, there are two cases to consider:
    \begin{enumerate}
        \item If $\mu_*^1(Q)=(p_0,p_0q_0-r_0,q_0)=(p_1,q_1,r_1)$, then Lemma \ref{lem:C(Q)/4q} implies
        \[p_1q_1-r_1>\frac{C(Q)}{4}q_1=\frac{C(Q)}{4}(p_0q_0-r_0)>\left(\frac{C(Q)}{4}\right)^2q_0>\left(\frac{C(Q)}{4}\right)^{\left\lfloor\frac{1+2}{2}\right\rfloor}q_0.\]
        \vspace{-3mm}
        \item If $\mu_*^1(Q)=(p_0q_0-r_0,p_0,q_0)=(p_1,q_1,r_1)$, then we directly have
        \[p_1q_1-r_1>\frac{C(Q)}{4}p_0>\frac{C(Q)}{4}q_0=\left(\frac{C(Q)}{4}\right)^{\left\lfloor\frac{1+2}{2}\right\rfloor}q_0.\]
    \end{enumerate}
\noindent This concludes the base cases $i=0,1$. For the induction step, we assume that
$$p_iq_i-r_i>\left(\frac{C(Q)}{4}\right)^{\left\lfloor\frac{i+2}{2}\right\rfloor}q_0$$
holds for all $0\leq i\leq n$ and want to show it holds for $i=n+1$. The induction hypothesis for $i=n$ is
$$p_nq_n-r_n>\left(\frac{C(Q)}{4}\right)^{\left\lfloor\frac{n+2}{2}\right\rfloor}q_0$$
with the inequalities $p_n\geq p_{n-1}q_{n-1}-r_{n-1}>\left(\frac{C(Q)}{4}\right)^{\left\lfloor\frac{n+1}{2}\right\rfloor}q_0$. There are two cases for the next quiver $\mu_*^{n+1}(Q)$ in the mutation sequence:
        \begin{enumerate}
            \item If $\mu_*^{n+1}(Q)=(p_n,p_nq_n-r_n,q_n)=(p_{n+1},q_{n+1},r_{n+1})$, then Lemma \ref{lem:C(Q)/4q} applies to give
            \begin{align*}
            \hspace{-5mm}p_{n+1}q_{n+1}-r_{n+1}&>\frac{C(Q)}{4}q_{n+1}\\
            &=\frac{C(Q)}{4}(p_nq_n-r_n)\\
            &>\left(\frac{C(Q)}{4}\right)^{\left\lfloor\frac{n+2}{2}\right\rfloor+1}q_0\\
            &=\left(\frac{C(Q)}{4}\right)^{\left\lfloor\frac{n+4}{2}\right\rfloor}q_0\\
            &\geq \left(\frac{C(Q)}{4}\right)^{\left\lfloor\frac{(n+1)+2}{2}\right\rfloor}q_0.
            \end{align*}
            \item If $\mu_*^{n+1}(Q)=(p_nq_n-r_n,p_n,q_n)=(p_{n+1},q_{n+1},r_{n+1})$, then we use Lemma \ref{lem:C(Q)/4q} as follows:
            \begin{align*}
            p_{n+1}q_{n+1}-r_{n+1}&>\frac{C(Q)}{4}q_{n+1}\\
            &= \frac{C(Q)}{4}p_n\\
            &\geq\frac{C(Q)}{4}(p_{n-1}q_{n-1}-r_{n-1})\\
            &>\left(\frac{C(Q)}{4}\right)^{\left\lfloor\frac{n+1}{2}\right\rfloor+1}q_0\\
            &\geq \left(\frac{C(Q)}{4}\right)^{\left\lfloor\frac{(n+1)+2}{2}\right\rfloor}q_0,
            \end{align*}
        \end{enumerate}
        which indeed yields \cref{eq:Markov_inequality} for $i=n+1$.
    \end{proof}
    
\noindent Since $C(Q)>4$ and thus $\frac{C(Q)}{4}>1$, \cref{lem:mu_*} implies 
$$\displaystyle \lim_{i\rightarrow \infty}p_iq_i-r_i>\lim_{i\rightarrow \infty}\left(\frac{C(Q)}{4}\right)^{\left\lfloor\frac{i+2}{2}\right\rfloor}q_0=\infty.$$
By construction, we have $$\displaystyle \lim_{i\rightarrow \infty}\text{max}(\mu_*^i(Q))\geq\displaystyle \lim_{i\rightarrow \infty}p_iq_i-r_i=\infty,$$
hence, $[Q]$ is unbounded, which is a contradiction. Therefore, it must have been that $C(Q)<4$ if $[Q]$ were bounded and infinite, which shows that the implication $(\Longrightarrow)$ holds in \cref{thm:main}. By construction, the statement in \cref{thm:main}.(ii) follows from \cref{lem:mu_*}.\hfill$\Box$

\subsection{Proof of \cref{thm:detbdd}}\label{ssec:proof_cor}

Let us prove $(\Longrightarrow)$ by establishing the contrapositive. First, let us assume $p>2$ and we want to conclude $[Q]$ is unbounded. If $Q$ is cyclic, then  \cref{thm:2iffbdd} implies $[Q]$ is unbounded. We thus assume the given quiver $Q$ is acyclic. Following the proof of \cref{lem:exists cyclic}, there exists a mutation $\mu$ so that $\mu(Q)$ is cyclic and $\text{Max}(\mu (Q))\geq \text{Max}(Q)$. Therefore $\mu(Q)$ is a cyclic quiver with $p>2$ and \cref{thm:2iffbdd} implies $[Q]=[\mu(Q)]$ is unbounded.

Second, let us assume $C(Q)>4$ and we want to conclude $[Q]$ is unbounded. By contradiction, suppose $[Q]$ is bounded. By \cref{thm:main}, $Q$ is either the Markov Quiver, or $[Q]$ is mutation acyclic and $C(Q)\leq 4$. However, the Markov Quiver has Markov Constant equal to $4$, hence in either case we get a contradiction to the assumption that $C(Q)>4$, thus $[Q]$ is not bounded.

\noindent Let us prove $(\Longleftarrow)$ directly from \cref{thm:main}. We are assuming $p\leq 2$ and $C(Q)\leq4$ and want to conclude $[Q]$ is bounded. First, we consider the case where the inequality $p<2$ is strict. By \cite[Lemma 3.3]{FT2017}, such mutation class $[Q]$ must be mutation acyclic, and since we already have $C(Q)\leq4$, \cref{thm:main} implies that $[Q]$ is bounded. Second, consider the case of equality $p=2$. If $Q$ is acyclic, then $[Q]$ is mutation acyclic and \cref{thm:main} again proves $[Q]$ is bounded. Therefore, we assume $Q$ is cyclic. In this case, if $p=q=r=2$, then $Q$ is the Markov quiver, and hence $[Q]$ is bounded. So it remains to check the cases where $r<2$. By \cite[Corollary 4.8]{FT2017}, $[Q]$ must then be mutation acyclic and \cref{thm:main} implies that $[Q]$ is bounded.\hfill$\Box$

\section{Alternative Proof of Main Result}\label{sec:alternative_proof_mainresults}
Let us now present an alternative proof of \cref{thm:detbdd}, using the geometric models from \cite{FT2017}. The reader is referred to the well-written and inspiring account \cite{FT2017} for the necessary details. Since the geometric model realizing quiver mutations depends on whether we have a mutation-acyclic class or a mutation-cyclic class, we must consider these two cases separately.\\

\textbf{Mutation-cyclic case}. Suppose that the given quiver $Q$ is mutation-cyclic. By \cite[Theorem 3.6]{FT2017}, a geometric realization of such a quiver is provided by three points $a_1,a_2,a_3\in\mathbb{H}^2$. The real weight between vertices $i,j$ of the quiver is given by $2\cosh(d(a_i,a_j))$, where $d$ is the hyperbolic distance in $\mathbb{H}^2$, cf.~\cite[Section 3.1.1]{FT2017}. In this case, since $\cosh(x)\geq 1$ for all $x\in\R$, the weights $p,q,r\in\R_{\geq 0}$ of a mutation-cyclic quiver $Q$ are all greater than or equal to $2$.

\begin{center}
	\begin{figure}[h]
		\centering
        \includegraphics[scale=0.8]{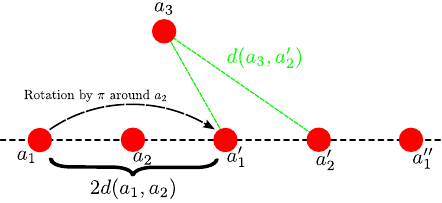}
		\caption{Depiction of the geometric realization of a mutation-cyclic rank $3$ quiver, where we are applying alternating mutations of $a_1,a_2$, realized by $\pi$-rotations, in order to translate them far away from $a_3$. In this figure, the points $a_1',a_2',a_1'',\dots$ are obtained by $\pi$-reflecting along $a_1$ and $a_2$ and iterating.}\label{fig:mutationcyclic}
	\end{figure}
\end{center}\vspace{-5mm}

\noindent We can consider two cases now:

\begin{itemize}
    \item[(i)] If the three points $a_1,a_2,a_3\in\mathbb{H}^2$ coincide, then the hyperbolic distance between any of the pairs is $0$. Therefore, we obtain $p=q=r=2\cosh(0)=2$, which is the Markov quiver. The Markov quiver satisfies $C(Q)=4$ and is bounded, in accordance with the statement of \cref{thm:detbdd}.

    \item[(ii)] Else, at least one of the points $a_i$ is distinct from the others: we can assume $a_1\neq a_2$ without loss of generality. In this case the distance $d(a_1,a_2)>0$ is positive, which corresponds to a mutation-cyclic quiver $Q=(p,q,r)$ with weight $p>2$. By \cite[Section 3.2]{FT2017}, quiver mutation is geometrically realized by a rotation by $\pi$ of one of the points $a_i$ with respect to another point, while keeping the other two points fixed. Thus, if we alternately rotate by $\pi$ along $a_1$ and then along $a_2$, then the images of $a_1,a_2$ will shift by $2d(a_1,a_2)$ each time. See \cref{fig:mutationcyclic} for a depiction. In particular, both points $a_1,a_2$ can move as far away from $a_3$ as needed.\footnote{This argument is independent of the location of $a_3$, e.g.~ it could be that $a_3=a_1$ or $a_2$ and the argument applies.} In consequence, both distances $d(a_1,a_3)$ and $d(a_2,a_3)$ approach infinity through this process, and we conclude that the mutation class $[Q]$ is unbounded. 
\end{itemize}
\noindent This concludes the proof of \cref{thm:detbdd} for the mutation-cyclic cases.\\

\textbf{Mutation-acyclic case}. Suppose that the quiver $Q$ is mutation-acyclic. By \cite[Theorem 4.4 \& Remark 4.8]{FT2017}, the geometric realization of such a quiver is a collection of three lines $l_i,l_j,l_k$ in one of $S^2,\mathbb{E}^2$, or $\mathbb{H}^2$, depending on whether $C(Q)<4$, $C(Q)=4$, or $C(Q)>4$ respectively, cf.~\cite[Figure 3]{FT2017}. Note that the lines may be intersecting, parallel (or coinciding), or disjoint, in the case of $\mathbb{H}^2$. By \cite[Section 2.2.1]{FT2017}, weights are computed as follows:
\begin{itemize}
    \item[(i)] If two lines $l_i,l_j$ intersect, then they form an angle $\alpha_{ij}$, and the corresponding real weight in the quiver is given by $|2\cos(\alpha_{ij})|$.
    \item[(ii)]  If $l_i,l_j$ coincide or are parallel (in $\mathbb{E}^2$), then we view them as intersecting with angle $0$, hence the corresponding weight in the quiver is $|2\cos(0)|=2$. If $l_i,l_j$ are disjoint in $\mathbb{H}^2$, then the corresponding weight of the quiver is $2\cosh(d(l_i,l_j))$, where $d$ measures the hyperbolic distance between two disjoint lines.
\end{itemize}
\noindent In this geometric realization, quiver mutation corresponds to reflecting one of the lines $l_i$ across another line $l_j$ while keeping the third line $l_k$ fixed. Since the geometric realization depends on the value of $C(Q)$, we must consider the following three cases:

\begin{enumerate}
\item {\bf Case $C(Q)<4$}. Then the geometric realization is in the 2-sphere $S^2$ in this case. In $S^2$, any two lines either intersect or coincide. Thus any corresponding quiver would have weights given by $|2\cos(\alpha_{ij})|$, and hence we have the upper bound $p,q,r\leq 2$. It follows that $[Q]$ is bounded by $2$.\\

\item {\bf Case $C(Q)=4$}. Then the geometric realization is in the Euclidean plane $\mathbb{E}^2$. In $\mathbb{E}^2$, any two lines either intersect, coincide, or are parallel. Thus the same argument for the case in $S^2$ applies, and we conclude that $[Q]$ is bounded by $2$.\\

\item {\bf Case $C(Q)>4$}. In this case, the geometric realization is in $\mathbb{H}^2$ and we shall show that this implies that the quiver mutation class $[Q]$ is unbounded. By construction, arbitrarily large weights must correspond to large distances between two disjoint lines in $\mathbb{H}^2$. Therefore, the goal is to prove that we can always apply a sequence of reflections to create arbitrarily large distances between disjoint lines in these hyperbolic realizations. The argument in this hyperbolic case is a bit more involved, and we establish it using the following steps:

    \begin{itemize}
        \item[(a)] First, we consider the case where the line $l_j$ is disjoint from the line $l_k$. Regardless of the location of the third line $l_i$, we can alternate mutations by first reflecting $l_j$ across $l_k$, thus obtaining a new line $l_j'$ while fixing $l_i$, and then reflecting $l_k$ across $l_j'$. Note that since $l_j$ and $l_k$ were disjoint to begin with, their images are also disjoint from each other. In particular, these reflections are hyperbolic translations along the common perpendicular to $l_j$ and $l_k$. Reiterating this process and renaming the correspondingly new lines $l_j',l_k'$, we conclude that both distances $d(l_j',l_i), d(l_k',l_i)$ increase arbitrarily, cf.~\Cref{fig:H2disjointquiver}. Therefore, it follows that two of the weights of the corresponding quiver are $p,q\in\{2\cosh(d(l_j',l_i)),2\cosh(d(l_k',l_i))\}$, both of which blow-up to infinity under this process, and hence $[Q]$ must be an unbounded mutation class.
        
        \noindent Note that this argument in $(a)$ shows that in order to show that a mutation class is unbounded, it suffices to find a geometric realization where two of the lines are disjoint.
        \begin{center}
	\begin{figure}[h]
		\centering
        \includegraphics[scale=0.8]{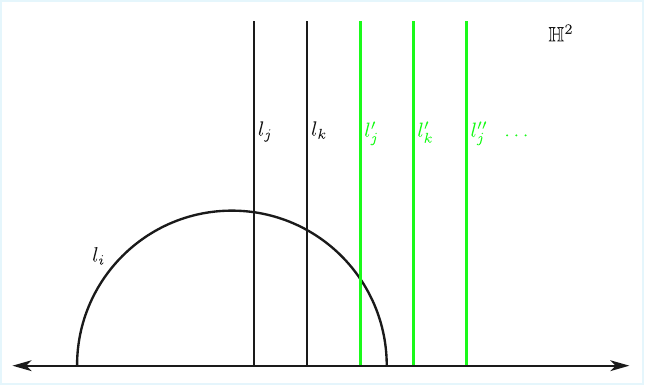}
		\caption{Depiction of the case where $l_j$ is disjoint from $l_k$, as in (a) above. The lines $l_j',l_k',l_j'',l_k'',\cdots$ are obtained by applying alternating mutations: these reflect $l_j$, $l_k$ across each other's images. Therefore, the distances $d(l_k',l_i),d(l_k'',l_i),d(l_k''',l_i),\dots$ increase arbitrarily, and same with $d(l_j',l_i),d(l_j'',l_i),d(l_j''',l_i),\dots$ forming an unbounded sequence. Note that $l_j,l_k$ need not both be vertical, but it is perhaps easier to visualize the case when $l_j,l_k$ are vertical, as depicted.}\label{fig:H2disjointquiver}
	\end{figure}
\end{center}
        \item[(b)] Second, let us consider the case where the three lines in $\mathbb{H}^2$ bound a triangle. Just for the time being, let us also assume that there is a pair of lines $l_j,l_k$ that form an intersection angle $\theta_i$ that is not a rational multiple of $\pi$. (The general case will be argued below.) Then, using the upper-half plane model for $\mathbb{H}^2$, we may apply a M\"obius transformation so that the line $l_i$ opposite to the angle $\theta_i$ is vertical.\footnote{These transformations are conformal, and thus the angles of the triangle do not change. Hence the resulting quiver remains the same.} In particular, the intersection $x=l_j\cap l_k$ of $l_j$ and $l_k$ is not on the line $l_i$. See \cref{fig:H2quiver} for an illustration. In consequence, when we apply a sequence of alternating mutations, given by reflecting $l_j$ across $l_k$, the angle $\theta_i$ of the intersection between the images of $l_j,l_k$ does not change, and so the entire angle from the initial $l_j$ to these images is precisely $n\theta_i$ for some $n\in\mathbb{Z}$. Since we assumed that $\theta_i$ is an irrational multiple of $\pi$, the set $\{n\theta_i \pmod{2\pi}|n\in \mathbb{Z}\}$ is dense in $[0,2\pi]$. This implies that there is some $l_j'$ through the intersection point $x$ that is arbitrarily close to being vertical. In particular, $l_i$ and such an $l_j'$ are disjoint, and we have reduced to case (a), resolved above.
      
    \begin{center}
	\begin{figure}[h]
		\centering
        \includegraphics[scale=0.8]{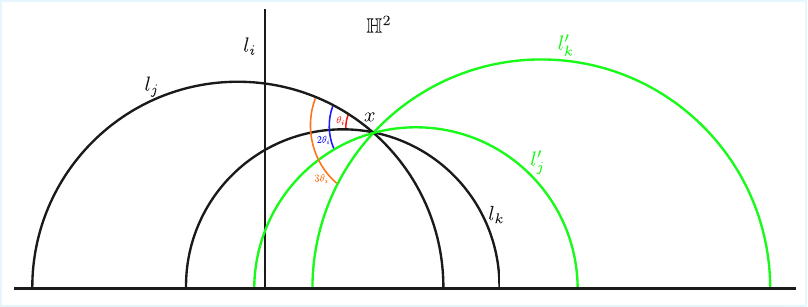}
		\caption{A depiction of the geometric realization of a rank $3$ mutation-acyclic quiver $Q$ with $C(Q)>4$ in the upper-half plane model $\mathbb{H}^2$. In this case, the mutated lines $l_j',l_k'$ are obtained by applying alternating reflections of $l_j,l_k$ along each other.}\label{fig:H2quiver}
	\end{figure}
\end{center}
\vspace{-5mm}
    
    \item[(c)] Finally, it remains to consider the case where the three lines in $\mathbb{H}^2$ bound a triangle and all three angles $\theta_i,\theta_j,\theta_k$ of the triangle are rational multiples of $\pi$. In this case, the weights of the corresponding quiver are given by $p,q,r\in \{|2\cos(\theta_i)|, |2\cos(\theta_j)|,|2\cos(\theta_k)|\}$ and, by $(b)$ above, it suffices to show that there is some quiver in the mutation class where at least one of the angles in its geometric representation is not a rational multiple of $\pi$.
    
    Depending on whether the quiver is acyclic or cyclic, a mutation could give us a new weight either of the form
    $$pq+r=2\cos(\theta_i)2\cos(\theta_j)+2\cos(\theta_k)$$
    or $$|pq-r|=||2\cos(\theta_i)2\cos(\theta_j)|-|2\cos(\theta_k)||,$$
    with possibly interchanging $\theta_i,\theta_j,\theta_k$ among each other. Note that the absolute values are placed here because it is proven in \cite{FT2017} that all acyclic quivers correspond to a geometric realization where all the angles are acute, hence $\cos(\theta)>0$, whereas all cyclic quivers correspond to a geometric realization where not all the angles are acute, and thus there is the possibility for $\cos(\theta)<0$. These identities above can be manipulated by first applying the product to sum identity for the cosine: $$\cos\phi_1\cos\phi_2=\frac12 (\cos(\phi_1-\phi_2)+\cos(\phi_1+\phi_2)).$$
    Indeed, in the first case, applying the identity to the right hand side reads:
    $$2\cos(\theta_i)2\cos(\theta_j)+2\cos(\theta_k)=2\cos(\theta_i-\theta_j)+2\cos(\theta_i+\theta_j)+2\cos(\theta_k).$$
    Therefore, if $pq+r=2\cos(\varphi)$ and $\varphi$ is also a rational multiple of $\pi$, then we must have $\cos(\varphi)=\cos(\theta_i-\theta_j)+\cos(\theta_i+\theta_j)+\cos(\theta_k)$, which we may rewrite as the identity
    \begin{equation}\label{eq:angle_identity}
        \cos(\varphi)-\cos(\theta_i-\theta_j)-\cos(\theta_i+\theta_j)-\cos(\theta_k)=0.
    \end{equation}
    By \cite[Theorem 7]{TrigRationalAngle}, there are only finitely many possibilities for such a $\varphi$ to solve \Cref{eq:angle_identity}. The same argument for $|pq-r|$ likewise yields finitely many possibilities for $|pq-r|$ to be twice the cosine of a rational angle. That is, there are only finitely many triangles with three rational angles in which a reflection would yield another triangle with three rational angles. At the same time, \cite[Lemma 5.4]{FT2017} shows that any quiver with a geometric realization in $\mathbb{H}^2$ has an infinite mutation class. Hence, there must exist some triangle in the geometric realization in which not all of the angles are rational multiples of $\pi$. This reduces to case $(b)$ above and it follows that $[Q]$ is unbounded.
    \end{itemize}
\end{enumerate}

\noindent This conclude the alternative proof for \cref{thm:detbdd} using the geometric realizations of \cite{FT2017}.


\section{Final remarks and questions} A few final comments on the results and proofs in Sections \ref{sec:proof_mainresults} and \ref{sec:alternative_proof_mainresults} above:\\

\noindent (1) On the hypotheses of \cref{thm:detbdd}, it is not true in general that $C(Q)\leq4$ implies $[Q]$ is bounded. For example, the cyclic quiver $Q=(3,3,3)$ has Markov constant $C(3,3,3)=0\leq 4$ and it is mutation cyclic and unbounded. Hence, the additional requirement $p\leq 2$ is necessary.\\

\noindent (2) The simpler part of the proof of \cref{thm:main} established that $\sqrt{C(Q)}$ is an actual bound, but to prove boundedness, we need not give a particularly sharp one. Here is a more intuitive argument that a mutation acyclic class $[Q]$ with $C(Q)< 4$ must necessarily be bounded, which does nevertheless not provide such a sharp upper bound.\\

\noindent We consider the weights of a quiver $Q=(p,q,r)$ as a point $(p,q,r)\in\R^3$ and think of applying mutations as a continuous action of $G=\Z_2\ast\Z_2\ast\Z_2$ on $\R^3$. Specifically, here $G=\Z_2\ast\Z_2\ast\Z_2$ is the quotient of the free group $\mathbb{F}_3=\langle \mu_1,\mu_2,\mu_3\rangle$ by the relations $\mu_i^2=1$.\footnote{We are admittedly not allowing relabeling of vertices here, but that is irrelevant for the argument.} Its elements can therefore be written as sequences $\mu_{i_k}\cdots \mu_{i_2}\mu_{i_1}$, where $i_j\in{1,2,3}$, and we may assume them to be reduced, i.e.~no two consecutive $i_j,i_{j+1}$ are identical. The group operation on $G$ thus corresponds to concatenation of mutation sequences. In this context, we let $G$ act on $\R^3$ via the map $\mbox{mut}:G\times\R^3\longrightarrow\R^3$ given by
$$\mbox{mut}(\mu_{i_k}\cdots \mu_{i_2}\mu_{i_1};p(Q),q(Q),r(Q))=(p(\mu_{i_k}\cdots \mu_{i_2}\mu_{i_1}(Q)),q(\mu_{i_k}\cdots \mu_{i_2}\mu_{i_1}(Q)),r(\mu_{i_k}\cdots \mu_{i_2}\mu_{i_1}(Q))),$$

\noindent where $p(Q'),q(Q'),r(Q')$ denote the weights of a quiver $Q'$ at the corresponding arrows, again understood as a point in $\R^3$. For instance, if $Q=(p,q,r)$, then $\mbox{mut}(\mu_i;p,q,r)$ are the weights of $\mu_i(Q)$, $i\in[1,3]$. By the definition of quiver mutation, cf.~\cite[Chapter 2]{FWZ}, the weights change according to a composition of continuous functions, including $\max$, and thus the action is continuous.

Now, the constraint $C(Q)<4$ implies that the real weights of $Q$ must be in the subset
\begin{align*}
X&:=\{(x,y,z)\in\mathbb{R}^3:x^2+y^2+z^2+|xyz|\leq4\}\subset \mathbb{R}^3,\quad\mbox{ if $Q$ is acyclic,}\\
Y&:=\{(x,y,z)\in\mathbb{R}^3:x^2+y^2+z^2-|xyz|\leq 4\}\subset \mathbb{R}^3,\quad\mbox{ if $Q$ is cyclic.}
\end{align*}
Note that $X$ is bounded as a subset of $\R^3$. Thus, all acyclic quivers in $[Q]$ will remain in $X$, which is bounded. For the cyclic quivers, we note that $Y$ contains 8 singular points $S=\{(\pm2,\pm2,\pm2)\}$ such that $Y\setminus S$ has a unique bounded component. Thus, the fact that any acyclic quiver lies in such bounded connected component and continuity of the above $G$-action imply that any cyclic quiver must actually remain in that bounded component. This implies that $[Q]$ is bounded under these hypothesis.\\

\noindent (3) In line with the alternative proof above, there is another way to establish $\sqrt{C(Q)}$ as an upper bound, by using proof of \cite[Theorem 6.2]{FT2017}. Indeed, in this mutation-acyclic case with $C(Q)\in(0,4)$, \cite{FT2017} established that the geometric realization is a triple of lines in $S^2$, and the weights of the quiver are $2\cos(\theta)$, where $\theta$ is the angle between two lines in the geometric realization. It follows that smaller angles would correspond to larger weights. In the proof of \cite[Theorem 6.2]{FT2017}, it is shown that the angles in the realization of other quivers in the mutation class cannot be smaller than the angle
$$\theta=\arcsin \frac{\sqrt{4-C(Q)}}{2}.$$
By constructing the corresponding right triangle, we obtain the following figure:
\begin{center}
\begin{tikzpicture}[
  my angle/.style={
    every pic quotes/.append style={text=black},
    draw=black,
    angle radius=1cm,
  }]
  \coordinate (C) at (-1.5,-1);
  \coordinate (A) at (1.5,-1);
  \coordinate (B) at (1.5,1);
  \draw (C) -- node[above] {$2$} (B) -- node[right] {$\sqrt{4-C(Q)}$} (A) -- node[below] {$\sqrt{C(Q)}$} (C);
  \draw (A) +(-.25,0) |- +(0,.25);
  \pic [my angle, "$\theta$"] {angle=A--C--B};
\end{tikzpicture}
\end{center}
Therefore, any of the angles in the realization will never be smaller than such an angle $\theta$. In consequence, the weights of quivers in the mutation class will be bounded above by
$$2\cos(\theta)=2\cdot \frac{\sqrt{C(Q)}}{2}=\sqrt{C(Q)},$$
in line with \cref{thm:main}.(i).\\

\noindent (4) For bounded mutation classes, we established that $||Q||\leq \sqrt{C(Q)}$. In fact, we believe that this bound is sharp for infinite mutation classes. That is, one should be able to construct a sequence of mutations on $Q$ to obtain a quiver with one of the weights arbitrarily close to $\sqrt{C(Q)}$ and thus the remaining two weights arbitrarily close to $0$. Though there is additional work to be done in order to present a rigorous proof for sharpness, we now provide a heuristic strategy in favor of this upper bound being sharp.

\noindent First, note that in proving that a mutation class is unbounded for $C(Q)>4$, we applied mutations on cyclic quivers to obtain quivers with increasing weights, cf.~\cref{lem:mu_*}. That said, in the case of $C(Q)<4$, the same mutations will no longer guarantee increasing weights.  
Now, if we have a cyclic quiver, then we can apply mutations to attain an acyclic quiver and then apply a mutation to increase the largest weight. Indeed, let us assume that we have a cyclic quiver $Q_0$ with weights $p,q_0,r_0$ and $0<r_0\leq q_0\leq p\leq C(Q)$, depicted as follows:
\begin{center}
    \begin{tikzcd}[column sep=1cm, row sep=1cm]
Q_0 & 2 \ar[rdd, "q_0"]\\
&\\
1 \ar[ruu, "p"] && 3\ar[ll,"r_0"]\\
\end{tikzcd}
\end{center}
\vspace{-10mm}
By \cite[Lemma 6.1]{FT2017}, there exists an acyclic representative in the mutation class that has a weight $p$. This is done by alternating mutations at vertices $1$ and $2$, so as to keep the weight $p$ fixed. Thus, we can and do assume now that we have an acyclic quiver $Q$ with weights $p,q,r$. There are six possible orientations for such a quiver, which we list as follows:
\begin{center}
\begin{tikzcd}[column sep=1cm, row sep=1cm]
& 2\\
& \\
1 \ar[ruu,"p"] \ar[rr, "r"] && 3 \ar[luu, "q"] \\
\end{tikzcd}
$\xleftrightarrow{\text{\hspace{5mm}$\mu_2$}\hspace{5mm}}$
    \begin{tikzcd}[column sep=1cm, row sep=1cm]
& 2 \ar[ldd, "p"] \ar[rdd, "q"]\\
&\\
1 \ar[rr, "r"] && 3 \\
\end{tikzcd}
$\xleftrightarrow{\text{\hspace{5mm}$\mu_3$}\hspace{5mm}}$
\begin{tikzcd}[column sep=1cm, row sep=1cm]
& 2 \ar[ldd,"p"] \\
& \\
1 && 3 \ar[ll, "r"] \ar[luu, "q"] \\
\end{tikzcd}
\end{center}
\vspace{-10mm}
\begin{center}
\begin{tikzcd}[column sep=1cm, row sep=1cm]
& 2 \ar[ldd, "p"] \ar[rdd, "q"]\\
& \\
1 && 3 \ar[ll, "r"] \\
\end{tikzcd}
$\xleftrightarrow{\text{\hspace{5mm}$\mu_2$}\hspace{5mm}}$
    \begin{tikzcd}[column sep=1cm, row sep=1cm]
& 2 \\
&\\
1 \ar[ruu, "p"] && 3 \ar[ll, "r"] \ar[luu, "q"] \\
\end{tikzcd}
$\xleftrightarrow{\text{\hspace{5mm}$\mu_3$}\hspace{5mm}}$
\begin{tikzcd}[column sep=1cm, row sep=1cm]
& 2 \ar[rdd,"q"] \\
& \\
1 \ar[ruu, "p"] \ar[rr, "r"] && 3 \\
\end{tikzcd}
\end{center}
\vspace{-10mm}
Let us assume the given quiver $Q$ is one of the two leftmost quivers, since the remaining four are mutation equivalent to one of these. In this case, a mutation at vertex $3$ yields a cyclic quiver with weights $p',q,r$ with $p'=p+qr$. By this process, done iteratively, we obtain strictly increasing largest weights for the quivers. Then, we can apply alternating mutations between vertices $1$ and $2$ to obtain another acyclic quiver $Q'$ with weight $p',q',r'$. Noting that $p'=p+qr$, we deduce the following identities by setting the Markov constants of $Q,Q'$ equal to each other:
\begin{align*}
    (p')^2+(q')^2+(r')^2+p'q'r'&=p^2+q^2+r^2+pqr&\Longleftrightarrow\\
    p'(p+qr)+(q')^2+(r')^2+p'q'r'&=p^2+q^2+r^2+pqr&\Longleftrightarrow\\
    (q')^2+(r')^2+p'q'r'+p'qr&=p^2+q^2+r^2+pqr-p'p&\Longleftrightarrow\\
    (q')^2+(r')^2+p'q'r'+p'qr&=p^2+q^2+r^2+pqr-p^2-pqr=q^2+r^2.
\end{align*}
Therefore we obtain the inequality $(q')^2+(r')^2<q^2+r^2$. By repeating this process infinitely many times, we expect to obtain quivers with largest weight $p$ arbitrarily close to $\sqrt{C(Q)}$, while the other two weights $q,r$ become arbitrarily close to $0$.\\

\noindent (5) As obvious as it is, we record this goal: {\it Find a complete and verifiable characterization of bounded mutation classes for quivers of higher rank.} That is, prove a result such as \cref{thm:main} and \cref{thm:detbdd} (or better) for quivers of rank $4$ and above.\\

\noindent (6) It would be interesting to understand the dynamical properties of quiver mutations for real weights in rank 3 and beyond. That is, understanding the weights of quivers as points in $\R^{{n\choose 2}}$, to characterize the distribution of points of a given orbit, either quantitatively or qualitatively. For instance, if the orbit is bounded, understanding what are the possible limit sets, e.g.~whether they are dense in some positive-dimensional subset of $\R^{{n\choose 2}}$. If the orbit is unbounded, it would be interesting to establish quantitative growth estimates, and study whether there are directions or cones of divergence.

\noindent Note that already in rank 3, it would be interesting to understand properties of mutation orbits beyond boundedness, e.g.~density in the given two level sets (cyclic and acyclic) for a fixed value of the Markov constant, ergodicity, or any other type of measure-related properties, see e.g.~Figures \ref{fig:Examples1} and \ref{fig:Examples2} above. The code in \cref{ssec:code} can maybe be helpful to develop intuition.

\section{Appendix: SageMath Code and details for figures}\label{sec:appendix}

The following two subsections contain some of the experimental data displayed in the introduction and SageMath code that produces many such examples. These examples and the code are {\it not} logically needed for the mathematical results of the article. That said, we found running this code helpful to gain intuition on the dynamics of real quiver mutation, in particular towards guessing the statement of what ended up being \cref{thm:main} and the key inequality in Equation \ref{eq:Markov_inequality}.

\subsection{SageMath code to generate and plot random mutation sequences of a quiver}\label{ssec:code} The main line of commands to produce a random sequence of mutations and their plot uses the following series of functions. The first function inputs a skew-symmetric matrix $B\in M_3(\Q)$ and an index $k\in\{1,2,3\}$ and outputs the mutated exchange matrix $\mu_k(B)$, as follows:

\tiny
\begin{lstlisting}[language=Python]
def mutate_matrix_rational_mpl(B, k):
 
 if B.dimensions() != (3, 3):
    raise ValueError("Input matrix B must be a 3x3 matrix.")
 if not B ==-B.transpose():
    raise ValueError("Input matrix B must be skew-symmetric.")
 if k not in [1, 2, 3]:
    raise ValueError("Mutation index k must be 1, 2, or 3.")
 B_prime = matrix(QQ, 3, 3)
 for i in range(3):
    for j in range(3):
        if i == k- 1 or j == k- 1:
            B_prime[i, j] =-B[i, j]
        else:
            bik = B[i, k- 1]
            kj = B[k- 1, j]
            if bik > 0 and kj > 0:
                B_prime[i, j] = B[i, j] + bik * kj
            elif bik < 0 and kj < 0:
                B_prime[i, j] = B[i, j]- bik * kj
            else:
                B_prime[i, j] = B[i, j]
return B_prime

\end{lstlisting}
\normalsize
\noindent This next function inputs a skew-symmetric matrix $B\in M_3(\Q)$ and a sequence of indices $(i_1,\ldots,i_\ell)$ with $i_j\in\{1,2,3\}$. It applies this sequence of mutations to $B$ and plots the resulting entries $(1,2)$, $(2,3)$ and $(1,3)$ of the mutated exchange matrices $(\mu_{i_j}\circ\ldots\circ\mu_{i_1})(B)$. The output is the list of such coordinate entries for the mutate matrices and the plot:
\tiny
\begin{lstlisting}[language=Python]
def apply_mutation_sequence_plot_mpl(B_initial, mutation_sequence):

    mutated_data = []
    B_current = B_initial
    mutated_data.append((float(B_current[0, 1]), float(B_current[1, 2]),float(B_current[0, 2])))
 
    for k in mutation_sequence:
        B_current = mutate_matrix_rational_mpl(B_current, k)
        mutated_data.append((float(B_current[0, 1]), float(B_current[1, 2]),
        $\sqcup\xhookrightarrow$float(B_current[0, 2])))
 
    fig = plt.figure()
    ax = fig.add_subplot(111, projection='3d')
    x = [data[0] for data in mutated_data]
    y = [data[1] for data in mutated_data]
    z = [data[2] for data in mutated_data]
    ax.scatter(x, y, z, c='blue', marker='o')
    ax.set_xlabel('b12')
    ax.set_ylabel('b23')
    ax.set_zlabel('b13')
    ax.set_title('Mutation Sequence in (b12, b23, b13) Space')
return mutated_data, plt

\end{lstlisting}
\normalsize
\noindent The following function inputs a length $\ell\in\N$ and outputs a randomly generated sequence of indices $(i_1,\ldots,i_\ell)$ with $i_j\in\{1,2,3\}$, of length $\ell$ with no two consecutive indices being equal, i.e.~$i_j\neq i_{j+1}$:
\footnotesize
\begin{lstlisting}[language=Python]
import random
def generate_alternating_sequence(length):
 
     if length <= 0:
       return []
    sequence = []
    first_number = random.choice([1, 2, 3])
    sequence.append(first_number)
 
    for _ in range(length- 1):
        possible_next = [num for num in [1, 2, 3] if num != sequence[-1]]
        next_number = random.choice(possible_next)
        sequence.append(next_number)
return sequence

\end{lstlisting}
\normalsize
\noindent This next function inputs a vector $v=(x,y,z)\in \Q^3$ and outputs the skew-symmetric matrix $M(v)\in M_3(\Q)$ with entries $(1,2)$ being $x$, $(2,3)$ being $y$ and $(1,3)$ being $z$:
\footnotesize
\begin{lstlisting}[language=Python]
def skew_symmetric_matrix(v):

     x, y, z = v
     M = matrix(QQ, 3, 3)
     M[0, 1] = x
     M[1, 0] =-x
     M[1, 2] = y
     M[2, 1] =-y
     M[0, 2] = z
     M[2, 0] =-z
return M
\end{lstlisting}
\normalsize
\noindent The {\bf main line of commands} that the user can choose to execute is as follows:
\footnotesize
\begin{lstlisting}[language=Python]
vector = (-0.6,-0.43,0.567)
B_rational_mpl = skew_symmetric_matrix(vector)
print("Initial Exchange Matrix is:")
show(B_rational_mpl)
length1 = 100
mutation_seq_mpl = generate_alternating_sequence(length1)
print(f"Random alternating sequence of length {length1}: {mutation_seq_mpl}")
mutation_history_mpl,plot_mpl=apply_mutation_sequence_plot_mpl(B_rational_mpl,mutation_seq_mpl)
plot_mpl.savefig("FigurePlot3D.pdf")\end{lstlisting}
\normalsize
\noindent In the line of commands above, the user chooses the quiver $Q=(p,q,r)$ by selecting the tuple ``vector''. The user also selects the length $\ell\in\N$ of the mutation sequence by choosing the value of ``length1''. The plot is then saved in the document named ``FigurePlot3D.pdf''.


\subsection{Mutation sequences for Figures \ref{fig:Examples1} and \ref{fig:Examples2}}\label{ssec:explicit_seq} Figures \ref{fig:Examples1} and \ref{fig:Examples2} have been generated by the code in \cref{ssec:code}. This subsection displays the precise mutation sequences $(i_1,\ldots,i_\ell)$ used in each of the examples. Here $\ell$ is the length of the mutation sequence and the indices are always chosen such that $i_j\neq i_{j+1}$ for all indices, to avoid involutive steps in the sequence. To start, for the quiver $Q=(-0.02,-0.01,0.03)$ in Figure \ref{fig:Examples1} (left), the mutation sequence $(i_1,\ldots,i_\ell)$ has $\ell=150$ and reads
\tiny
$$(2,3,2,1,3,2,1,3,1,2,1,2,
3,2, 1,3, 1, 2, 3,1,2,1,2,1,2,3,1,2,1,3,1,3,2,3,1,3,2,1, 2,
1,2, 1,2, 1, 3, 1,2,1,2,3,2,$$ $$1,3,2,3,1,2,3,1,2,1,2,3,1,3, 2,
1,2, 1,3, 1, 3, 2,3,2,3,2,3,2,3,2,1,3,1,3,1,2,1,2,3,1,2, 1,
3,1, 2,1, 2, 3, 2,1,3,2,1,$$
$$3,2,1,2,1,
3,1,2,3,2,1,2,3,2,1, 2,
1,3, 2,3, 2, 3, 2,3,1,3,2,1,3,2,1,2,3,2,3,2,1,2,3,1,3,2, 3,
1,2, 1).$$
\normalsize
For the quiver $Q=(-0.9,-0.22,0.7106)$ in Figure \ref{fig:Examples1} (center), $\ell=100$ and the sequence is
\tiny
$$(2,3,1,3,1,3,2,1,3,1,3,1,
2,3, 1,2, 1, 2, 1,2,1,3,1,2,1,2,3,2,3,1,3,2,1,3,2,1,3,2, 3,
1,3, 2,1, 2,3, 2,1,3,2,$$
$$3,1,2,3,1,2,3,2,1,2,1,3,1,2,1,2, 3,
2,1, 3,2, 1, 2, 3,2,3,1,2,3,1,3,2,3,1,2,1,3,1,3,1,3,1,3, 1,
2,3, 2,1, 2, 3, 1).$$

\normalsize
\noindent For the quiver $Q=(-0.84,-0.26,0.11)$ in Figure \ref{fig:Examples1} (right), $\ell=125$ and the sequence is
\tiny

$$(3,2,3,1,2,3,2,1,2,1,2,3,
2,1, 3,2, 3, 2, 3,1,2,1,3,2,1,2,3,2,3,1,3,1,3,1,3,2,1,3, 2,
1,3, 1,$$
$$2, 3, 1, 2,1,3,1,2,3,2,1,2,1,3,1,2,1,3,2,3,2,1,2, 3,
1,3, 1,3, 2, 1, 2,3,1,3,1,3,2,1,3,2,1,2,$$
$$3,1,3,2,3,2,1,2, 1,
3,1, 2,1, 3, 1, 3,1,3,2,1,2,3,1,3,2,3,2,3,2,1,2,3,1,2,3, 2,
1,3, 2,3, 1).$$
\normalsize

\noindent Figure \ref{fig:Examples2} plots quivers in the mutation class of the quiver $Q=(-0.6,-0.43,0.567)$. Specifically, it plots the images of $Q$ after applying three different sequences of mutations. The sequence are described as follows. For Figure \ref{fig:Examples2} (right), the mutation sequence has length $\ell=100$ and it is
\tiny
$$(3,1,2,1,2,1,3,2,3,1,2,1,
2,1, 3,1, 3, 1, 2,1,3,2,1,3,2,1,3,2,1,3,1,3,1,3,1,2,1,2, 3,
2,1, 3,2, 1, 3, 2,1,3,2,1,$$
$$3,1,2,1,3,2,1,3,1,3,2,3,2,1,2, 1,
3,1, 2,3, 2, 1, 3,1,3,2,3,2,3,1,2,3,1,2,3,2,3,1,2,3,2,1, 3,
2,3, 2,1, 2, 3, 1).$$
\normalsize

\noindent For Figure \ref{fig:Examples2} (center), $\ell=100$ and the mutation sequence is
\tiny
$$(1,2,1,3,2,3,2,1,3,2,1,2,
1,3, 1,3, 2, 1, 3,1,2,3,1,2,3,2,3,1,2,3,1,2,1,3,1,2,1,2, 1,
3,2, 1,2, 3, 1, 2,3,1,3,1,$$
$$2,3,2,3,1,3,2,3,1,3,1,2,3,1,2, 3,
2,3, 1,3, 2, 1, 2,1,3,1,2,1,3,1,3,2,1,3,1,3,2,3,2,1,2,1, 3,
2,3, 2,1, 3, 1, 2).$$
\normalsize

\noindent For Figure \ref{fig:Examples2} (right), $\ell=125$ and the mutation sequence is
\tiny
$$(2,3,2,3,2,1,2,1,2,3,2,3,
1,3, 1,2, 1, 3, 1,3,1,3,1,3,2,1,2,3,1,3,1,3,2,1,3,2,1,2, 3,
1,3,$$
$$ 2,3,2, 1, 3,2,1,3,2,3,1,3,1,2,1,3,2,3,1,3,1,3,2,1, 3,
2,3, 1,3, 2, 1, 2,1,2,3,2,3,1,3,2,1,2,$$
$$3,1,3,2,3,2,3,1,3, 1,
2,3, 1,3, 1, 3, 2,1,2,3,1,2,1,2,1,3,2,3,1,3,2,3,1,3,1,2, 1,
3,1, 2,1, 3).$$
\normalsize

\noindent The reader can generate many other such figures by running the code in the Subsection \ref{ssec:code}.

\bibliographystyle{plain}
\bibliography{ref}

\end{document}